\newcommand{\mfrak}{\mathfrak{m}}
\newcommand{\sfrak}{\mathfrak{s}}
\newcommand{\ufrak}{\mathfrak{u}}
\newcommand{\Lfrak}{\mathfrak{L}}
\newcommand{\Abf}{\mathbf{A}}
\newcommand{\Gbf}{\mathbf{G}}
\newcommand{\Lbf}{\mathbf{L}}
\newcommand{\Mbf}{\mathbf{M}}
\newcommand{\Dcal}{\mathcal{D}}
\newcommand{\Gcal}{\mathcal{G}}
\newcommand{\Hcal}{\mathcal{H}}
\newcommand{\Ical}{\mathcal{I}}
\newcommand{\Lcal}{\mathcal{L}}
\newcommand{\Ocal}{\mathcal{O}}
\newcommand{\Rcal}{\mathcal{R}}
\newcommand{\Dscr}{\mathscr{D}}
\newcommand{\Kscr}{\mathscr{K}}
\newcommand{\Lscr}{\mathscr{L}}
\newcommand{\Mscr}{\mathscr{M}}
\newcommand{\Uscr}{\mathscr{U}}
\newcommand{\Vscr}{\mathscr{V}}
\newcommand{\Frm}{\mathrm{F}}
\newcommand{\Trm}{\mathrm{T}}
\newcommand{\Urm}{\mathrm{U}}
\newcommand{\Abb}{\mathbb{A}}
\newcommand{\Cbb}{\mathbb{C}}
\newcommand{\Dbb}{\mathbb{D}}
\newcommand{\Gbb}{\mathbb{G}}
\newcommand{\Pbb}{\mathbb{P}}
\newcommand{\Qbb}{\mathbb{Q}}
\newcommand{\Zbb}{\mathbb{Z}}
\newcommand{\Ext}{\operatorname{Ext}}
\newcommand{\Spec}{\operatorname{Spec}}
\newcommand{\smin}{\smallsetminus}
\newcommand{\HH}{\operatorname{H}}
\newcommand{\Aut}{\operatorname{Aut}}
\newcommand{\Div}{\operatorname{Div}}
\newcommand{\one}{\mathbf{1}}
\newcommand{\Hom}{\operatorname{Hom}}
\newcommand{\image}{\operatorname{image}}
\newcommand{\sing}{\mathrm{sing}}
\newcommand{\an}{\mathrm{an}}
\newcommand{\trdeg}{\operatorname{trdeg}}
\newcommand{\KM}{\operatorname{K}^\mathrm{M}}
\newcommand{\Lie}{\operatorname{Lie}}
\newcommand{\MHS}{\mathrm{MHS}}
\newcommand{\Pic}{\operatorname{Pic}}
\newcommand{\Picbf}{\mathbf{Pic}}
\newcommand{\acl}{\operatorname{acl}}
\newcommand{\KL}{\Kscr_{\Lambda}}
\renewcommand{\div}{\operatorname{div}}
\newcommand{\can}{\mathrm{can}}
\newcommand{\coker}{\operatorname{coker}}
\newtheorem{theorem}{Theorem}
\newtheorem{lemma}[theorem]{Lemma}
\newtheorem{proposition}[theorem]{Proposition}
\newtheorem{fact}[theorem]{Fact}
\newtheorem{step}{Step}
\newtheorem*{theorem*}{Theorem}
\newcommand{\Isom}{\operatorname{Isom}}
\newcommand{\rat}{\mathrm{rat}}
\newcommand{\Isommod}{\underline{\Isom}}
\newtheorem{maintheorem}{Theorem}
\theoremstyle{remark}
\renewcommand{\tilde}{\widetilde}
\renewcommand{\epsilon}{\varepsilon}
\renewcommand{\bar}{\overline}
\numberwithin{theorem}{section}
\numberwithin{equation}{section}
\title{A Torelli theorem for higher-dimensional function fields}
\author{Adam Topaz}
\address{
    Mathematical and Statistical Sciences \\
    University of Alberta \\
    632 Central Academic Building \\
    Edmonton AB T6G 2G1 \\
    Canada }
\subjclass{12G, 12J, 19D}
\email{topaz@ualberta.ca}
\urladdr{https://adamtopaz.com/}
\date{\today}
\thanks{Research supported in part by EPSRC programme grant EP/M024830/1, and in part by NSERC and the University of Alberta.}
\keywords{Function fields, 1-motives, mixed Hodge structures, anabelian geometry}
\begin{document}

\begin{abstract}
  We prove a Torelli-like theorem for higher-dimensional function fields, from the point of view of ``almost-abelian'' anabelian geometry.
\end{abstract}
\maketitle

\setcounter{tocdepth}{1}
\tableofcontents

\section{Introduction}\label{section:intro}

The classical \emph{Torelli Theorem}, in its cohomological form, can be stated as follows:

\begin{theorem*}
  Let $X$ be a smooth compact complex curve.
  Then the isomorphism type of $X$ is determined by the singular cohomology group $\HH^1(X,\Zbb)$, endowed with its canonical polarized Hodge structure.
\end{theorem*}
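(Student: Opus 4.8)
The plan is to factor the statement through the classical \emph{geometric} Torelli theorem. First observe that the polarized Hodge structure already manufactures the Jacobian as a principally polarized abelian variety: writing $H = H^1(X,\Z)$ with its weight-$1$ decomposition $H_\C = H^{1,0}\oplus H^{0,1}$ and the polarization form $Q\colon H\times H\to\Z$ coming from cup product together with the orientation class, one forms the complex torus $J(X) := (H^{1,0})^\vee/H^\vee$, where $H^\vee = \Hom(H,\Z) = H_1(X,\Z)$ is embedded in $(H^{1,0})^\vee$ via the Hodge filtration, and $Q$ endows $J(X)$ with a principal polarization $\lambda_Q$. Since $H^{1,0}$ is canonically $\H^0(X,\Omega^1_X)$, this is the usual Jacobian, and $(J(X),\lambda_Q)$ is built functorially from $(H,Q,F^\bullet)$. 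Hence an isomorphism of polarized Hodge structures $H^1(X,\Z)\cong H^1(X',\Z)$ yields an isomorphism $(J(X),\lambda_Q)\cong(J(X'),\lambda_{Q'})$ of principally polarized abelian varieties, and it remains to recover $X$ from this geometric datum.

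So fix a symmetric theta divisor $\Theta\subset J(X)$ defining $\lambda_Q$. By Riemann's theorem a translate of $\Theta$ is the Abel--Jacobi image $W_{g-1}\subset\Pic^{g-1}(X)$, and Riemann's singularity theorem identifies $\operatorname{mult}_x\Theta$ with $h^0$ of the corresponding degree-$(g-1)$ line bundle, so that $\operatorname{Sing}\Theta = W^1_{g-1}$. When $X$ is non-hyperelliptic this locus has codimension $\ge 2$ in $\Theta$, so the Gauss map
\[
  \mathcal{G}\colon \Theta^{\mathrm{sm}}\longrightarrow \Pbb^{g-1} = \Pbb(H^{1,0}),
\]
sending a smooth point to its tangent hyperplane translated to the origin (a hyperplane in $T_0J(X)\cong(H^{1,0})^\vee$), is defined on a big open set and is generically finite; by Andreotti's analysis its branch divisor coincides with the projective dual hypersurface $\check{C}$ of the canonically embedded curve $C = \phi_{K_X}(X)\subset\Pbb^{g-1}$. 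Biduality $\check{\check C} = C$ then returns the canonical curve, hence $X$, from the intrinsic geometry of $(J(X),\Theta)$.

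The main obstacle is exactly this last extraction, together with the genuinely different \emph{hyperelliptic} case: there $\operatorname{Sing}\Theta = W^1_{g-1}$ is larger, the canonical image degenerates to a rational normal curve rather than $X$, and the branch behaviour of $\mathcal{G}$ is non-reduced, so one must instead read off the $2g+2$ Weierstrass points on $\Pbb^1$ from the finer ramification structure of $\mathcal{G}$ (equivalently, from the singularities of $W^1_{g-1}$) and reconstruct $X$ as the associated double cover. Assembling this with the degenerate low-genus cases --- $g=0$, where $X\cong\Pbb^1$ is forced; $g=1$, where $X\cong J(X)$; and $g=2$, where $X$ is hyperelliptic and handled as above --- recovers the isomorphism type of $X$ from its polarized Hodge structure in all cases.
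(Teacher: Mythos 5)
This statement is the classical Torelli theorem, which the paper quotes in the introduction purely as motivation; no proof of it appears anywhere in the paper (the paper's actual results are the higher-dimensional birational variants, Theorems A and B, proved by entirely different, anabelian/1-motivic methods). Your proposal is the standard classical proof: the polarized weight-one Hodge structure on $\H^1(X,\Z)$ is equivalent, via the Riemann relations, to the principally polarized Jacobian $(J(X),\Theta)$, and geometric Torelli then recovers $X$ from $(J(X),\Theta)$ by Riemann's theorem, the Riemann singularity theorem, and Andreotti's identification of the branch divisor of the Gauss map of $\Theta$ with the dual hypersurface of the canonical curve, with the hyperelliptic and low-genus cases treated separately. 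This outline is correct and complete at the level of detail appropriate for a classical result, though of course each ingredient (Riemann's theorem, the singularity theorem, Andreotti's branch-locus computation, biduality) is itself a substantial theorem being invoked rather than proved. Since the paper supplies no argument of its own here, there is nothing to compare your route against; just be aware that nothing in the body of the paper depends on this proof, so it plays no role in verifying the paper's main theorems.
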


In this paper, we develop and prove a higher-dimensional \emph{birational} variant of this theorem.
As expected, one must include not only $\HH^1$ (with its \emph{mixed} Hodge structure), but some additional \emph{non-abelian data} as well.
It turns out that the ``two-step nilpotent'' information, encoded as the kernel of the cup-product, provides sufficient non-abelian data in this setting.
Also, our result works even with \emph{rational coefficients}, in contrast to the classical Torelli theorem mentioned above.

\subsection{Main result}\label{subsection:main-result}

Let $k$ be an algebraically closed field and $\sigma : k \hookrightarrow \Cbb$ an embedding into the complex numbers.
Let $\Lambda$ be any subring of $\Qbb$.
For a $k$-variety $X$, consider $X^{\an} := X(\Cbb)$ (computed via $\sigma$) endowed with the complex topology, and define the \emph{Betti} cohomology of $X$ in the usual way as
\[ \HH^{i}(X,\Lambda) := \HH^{i}_{\sing}(X^{\an},\Lambda). \]
Following Deligne~\cites{MR0498551,MR0498552}, one can endow $\HH^{i}(X,\Lambda)$ with a canonical \emph{mixed Hodge structure} over $\Lambda$.
We write $\Lambda(j)$ for the $j$-th Tate-Hodge structure over $\Lambda$, which is the unique pure Hodge structure over $\Lambda$ with underlying module $\Lambda$ which is of Hodge type $(-j,-j)$.
As usual, we write $\HH^{i}(X,\Lambda(j)) := \HH^{i}(X,\Lambda) \otimes \Lambda(j)$.

Let $K|k$ be a function field, and $X$ a \emph{model} of $K|k$, i.e.~$X$ is an integral $k$-variety whose function field is $K$.
Define
\[ \HH^{i}(K|k,\Lambda(j)) := \varinjlim_{U} \HH^{i}(U,\Lambda(j)), \]
where $U$ varies over the nonempty open $k$-subvarieties of $X$.
We consider $\HH^{i}(K|k,\Lambda(j))$ as a mixed Hodge structure of possibly infinite rank.
This construction does not depend on the choice of model $X$ of $K|k$.

The cup-product in singular cohomology yields a well-defined cup-product on
\[ \HH^{*}(K|k,\Lambda(*)) := \bigoplus_{i \geq 0} \HH^{i}(K|k,\Lambda(i)), \]
making it into a graded-commutative ring.
We write
\[ \Rcal(K|k,\Lambda) := \{ (x,y) \ | \ x,y \in \HH^{1}(K|k,\Lambda(1)), \ x \cup y = 0 \}\]
for the set of pairs of elements of $\HH^{1}(K|k,\Lambda(1))$ whose cup-product vanishes.
With this notation and terminology, we may state our main result as follows (see Theorem~\ref{theorem:main-theorem} for a precise formulation).

\begin{maintheorem}\label{maintheorem}
  In the above context, assume furthermore that $\trdeg(K|k) \geq 2$.
  Then the isomorphism type of $K|k$, as fields, is determined by the following data:
  \begin{enumerate}
    \item The mixed Hodge structure $\HH^{1}(K|k,\Lambda(1))$.
    \item The subset $\Rcal(K|k,\Lambda) \subset \HH^{1}(K|k,\Lambda(1)) \times \HH^{1}(K|k,\Lambda(1))$.
  \end{enumerate}
\end{maintheorem}

\subsection{A comment about the proof}

Theorem~\ref{maintheorem} should certainly seem reasonable, especially to the reader who is familiar both with results concerning $1$-motives and their Hodge realizations, and with certain recent results from birational anabelian geometry.
The main result follows by combining the following:
\begin{enumerate}
  \item The comparison of a $1$-motive with its Hodge realization, due to Deligne~\cite{MR0498552}.
        We will only need a simple case of this comparison which is summarized in Lemma~\ref{lemma:hodge-realization-iso}.
  \item The construction of the Picard $1$-motive of a smooth quasi-projective variety and the calculation of its Hodge realization.
        This is due essentially to Serre~\cite{serre1958morphismes2}, and/or the works of Barbieri-Viale, Srinivas~\cite{MR1891270}, Ramachandran~\cite{MR1837235}.
  \item Methods for reconstructing function fields over algebraically closed fields in birational anabelian geometry, due to Bogomolov~\cite{MR1260938}, Bogomolov-Tschinkel~\cites{MR2421544,MR2537087} and Pop~\cites{pop2002birational,MR2867932,MR2891876}.
\end{enumerate}
In addition to the above points, there are several nontrivial hurdles one must overcome, specifically in the case where $\Lambda = \Qbb$, where the known ``global'' anabelian techniques (e.g.~from Pop~\cites{MR2867932,MR2891876} and/or Bogomolov-Tschinkel~\cites{MR2421544,MR2537087}) break down, as one can no longer distinguish between the ``divisible'' and ``non-divisible.''
We overcome these difficulties by relying on arguments surrounding the connection between algebraic dependence and the cup-product, which are, in some sense, analogous to the ideas from~\cite{MR3552242} and~\cite{MR3827205}.
As we see it, the primary novelty of this work comes from the fact that it applies \emph{anabelian} techniques in a purely \emph{motivic} setting.

\subsection*{Acknowledgments}

The author warmly thanks T.~Szamuely for several comments and suggestions concerning an early version of this paper.
The author also thanks all who expressed interest in this work, and in particular A.~Cadoret, M.~Kim, K.~Kremnitzer, F.~Pop, and B.~Zilber.

\section{Cohomology}\label{section:cohom}

Throughout the paper, we work with a fixed algebraically closed field $k$ endowed with an embedding $\sigma : k \hookrightarrow \Cbb$, and a subring $\Lambda$ of $\Qbb$ which will be fixed as our coefficient ring.
By a \emph{$k$-variety} we mean a separated scheme of finite type over $k$, and \emph{morphisms} of $k$-varieties are morphisms over $\Spec k$.

Given a $k$-variety $X$, write $X^{\an} := X(\Cbb)$, computed via $\sigma$, and endowed with the complex topology.
We will work with \emph{Betti cohomology} (with respect to $\sigma$) defined in the usual way as the singular cohomology of $X^{\an}$:
\[ \HH^{i}(X,\Lambda) := \HH^{i}_{\sing}(X^{\an},\Lambda). \]
This is a $\Lambda$-module which is canonically endowed with a mixed Hodge structure~\cites{MR0498551,MR0498552}.
We write $\HH^{i}(X,\Lambda(j)) := \HH^{i}(X,\Lambda) \otimes \Lambda(j)$ for its $j$-th Tate twist.
Of course, the construction of $\HH^{i}(X,\Lambda(j))$ depends on the choice of $\sigma$, but we will exclude it from the notation while ensuring it is understood from context.

\subsection{Models}\label{subsection:models}

Let $K$ be a function field over $k$.
By a \emph{model} of $K|k$, we mean an integral quasi-projective $k$-variety whose function field is $K$.
Given such a model, we define
\[ \HH^{i}(K|k,\Lambda(j)) := \varinjlim_{U} \HH^{i}(U,\Lambda(j)), \]
where $U$ varies over the nonempty open $k$-subvarieties of $X$, considered as a (possibly infinite-rank) mixed Hodge structure over $\Lambda$.
The cup-product in singular cohomology yields a natural \emph{cup-product}:
\[ \cup : \HH^{i}(K|k,\Lambda(j)) \otimes_{\Lambda} \HH^{i'}(K|k,\Lambda(j')) \to \HH^{i+i'}(K|k,\Lambda(j+j')) \]
which makes $\HH^{*}(K|k,\Lambda(*)) := \bigoplus_{i \geq 0} \HH^{i}(K|k,\Lambda(i))$ into a graded-commutative $\Lambda$-algebra.

For a \emph{smooth} model $X$ of $K|k$ and nonempty open $k$-subvariety $U$ of $X$, the morphism
\[ \HH^{1}(X,\Lambda(1)) \to \HH^{1}(U,\Lambda(1)), \]
induced by the inclusion $U \hookrightarrow X$, is known to be injective.
Thus the structure map $\HH^{1}(X,\Lambda(1)) \to \HH^{1}(K|k,\Lambda(1))$ is injective as well.
In other words, the underlying $\Lambda$-module of $\HH^{1}(K|k,\Lambda(1))$ can be considered as an \emph{inductive union} of $\HH^{1}(U,\Lambda(1))$ as $U$ varies over the smooth models of $K|k$.
We will tacitly identify $\HH^{1}(U,\Lambda(1))$ with its image in $\HH^{1}(K|k,\Lambda(1))$ whenever $U$ is such a smooth model of $K|k$.

\subsection{Functoriality}

Let $\iota : L \hookrightarrow K$ be a $k$-embedding of function fields over $k$.
By a \emph{model} of $\iota$, we mean a dominant morphism $f : X \to Y$ where $X$ is a model of $K|k$ and $Y$ is a model of $L|k$ which induces $\iota$ on the level of function fields.
Given such a model $f : X \to Y$ of $\iota$, we obtain a canonical map
\[ \iota_{*} : \HH^{i}(L|k,\Lambda(j)) = \varinjlim_{U} \HH^{i}(U,\Lambda(j)) \xrightarrow{f^{*}} \varinjlim_{U} \HH^{i}(f^{-1}(U),\Lambda(j)) \to \HH^{i}(K|k,\Lambda(j)), \]
where $U$ varies over the nonempty open $k$-subvarieties of $Y$.
This morphism does not depend on the choice of model $f$, and this construction makes $\HH^{i}(K|k,\Lambda(j))$ (covariantly) functorial in $K$ with respect to $k$-embeddings.

\subsection{Kummer theory}\label{subsection:kummer-theory}

It is well-known that one has $\HH^{1}(\Gbb_{m},\Lambda(1)) = \Lambda(0)$, as mixed Hodge structures.
Let $K|k$ be a function field and $f \in K^{\times}$ be given.
Let $X$ be a model of $K|k$ and $U$ an open $k$-subvariety of $X$ such that $f \in \Ocal^{\times}(U)$.
Then $f$ corresponds to a morphism $f : U \to \Gbb_{m}$, and hence induces a canonical map of $\Lambda$-modules
\[ \Lambda = \HH^{1}(\Gbb_{m},\Lambda(1)) \xrightarrow{f^{*}} \HH^{1}(U,\Lambda(1)) \to \HH^{1}(K|k,\Lambda(1)). \]
We write $\kappa_{U}(f)$ for the image of $1 \in \Lambda$ in $\HH^{1}(U,\Lambda(1))$ with respect to this map.
It is a straightforward consequence of the K\"unneth formula that the corresponding map
\[ \kappa_U : \Ocal^\times(U) \rightarrow \HH^1(U,\Lambda(1)) \]
is a homomorphism of abelian groups.
As any element of $K^{\times}$ is contained in $\Ocal^{\times}(U)$ for any sufficiently small $U$ as above, we obtain a map
\[ \kappa_{K} : K^{\times} \to \HH^{1}(K|k,\Lambda(1)) \]
by taking the colimit of the maps $\kappa_{U}$.
This map $\kappa_{K}$ is therefore also a morphism of abelian groups.
It is a straightforward consequence of the definitions that $k^{\times}$ is contained in the kernel of $\kappa_{U}$ and $\kappa_{K}$.

Throughout the paper we will write $\Kscr_{\Lambda}(K|k) := (K^{\times}/k^{\times}) \otimes_{\Zbb} \Lambda$.
For $t \in K^{\times}$, we write $t^{\circ}$ for the image of $t$ in $\KL(K|k)$.
We will always use additive notation for the $\Lambda$-module $\KL(K|k)$, while $K^{\times}/k^{\times}$ will be written multiplicatively.
The assignment $K \mapsto \KL(K|k)$ is clearly functorial in $K$ with respect to $k$-embeddings.
For a $k$-embedding $\iota : L \to K$, we will write $\iota_{*} : \KL(L|k) \to \KL(K|k)$ for the corresponding morphism of $\Lambda$-modules.
The Kummer map $\kappa_{K}$ defined above induces a morphism of $\Lambda$-modules
\[ \kappa_{K}^{\Lambda} : \KL(K|k) \to \HH^{1}(K|k,\Lambda(1)), \]
which is natural in $K$ with respect to $k$-embeddings.

\subsection{Milnor K-theory}\label{subsection:milnorK}

The \emph{Milnor K-ring} of $K$ is the graded-commutative ring which is denoted and defined as follows:
\[ \KM_{*}(K) := \frac{\Trm_{*}(K^{\times})}{ \langle  x \otimes (1-x) \ | \ x \in K \smin \{0,1\}\rangle }. \]
Here $\Trm_{*}(K^{\times})$ denotes the (graded) tensor algebra of $K^{\times}$, considered as an abelian group.
It is customary to write $\{f_{1},\ldots,f_{n}\} \in \KM_{n}(K)$ for the product of $f_{1},\ldots,f_{n} \in K^{\times} = \KM_{1}(K)$ in this ring.

Since $\HH^{2}(\Pbb^{1} \smin \{0,1,\infty\}, \Lambda(2)) = 0$, it follows from functoriality that $\kappa_{K}(t) \cup \kappa_{K}(1-t) = 0$ in $\HH^{2}(K|k,\Lambda(2))$ for all $t \in K \smin \{0,1\}$.
Hence the universal property of $\KM_{*}(K)$ shows that $\kappa_{K}$ extends to a morphism of graded-commutative rings
\[ \kappa_{K}^{*} : \KM_{*}(K) \to \HH^{*}(K|k,\Lambda(*)). \]
The $r$-th component of this map, denoted $\kappa_{K}^{r} : \KM_{r}(K) \to \HH^{r}(K|k,\Lambda(r))$, is uniquely determined by the rule $\kappa_{K}^{r}\{f_{1},\ldots,f_{r}\} := \kappa_{K}(f_{1}) \cup \cdots \cup \kappa_{K}(f_{r})$ for $f_{1},\ldots,f_{r} \in K^{\times}$.

\subsection{Residues}\label{subsection:residues}

Suppose that $X$ is a smooth $k$-variety and $Z$ is a smooth closed $k$-subvariety of $X$ which is pure of codimension $1$.
Put $U := X \smin Z$.
Recall that one has a so-called \emph{residue morphism} associated to $(X,Z)$:
\[ \partial_{X,Z} : \HH^{i+1}(U,\Lambda(j+1)) \to \HH^{i}(Z,\Lambda(j)). \]
The following fact seems to be well-known.
For a purely algebraic proof of this assertion, which works with any suitable cohomology theory, we refer the reader to~\cite{MR2431508}*{Proposition 2.6.5} and the surrounding discussion.
\begin{fact}\label{fact:residue-calc}
  In the above context, assume furthermore that $f : X \to \Abb^{1}$ is a morphism of $k$-varieties such that $Z$ is the fibre of $f$ above $0$.
  Consider its restriction $f : U \to \Gbb_{m}$ and the associated function $f \in \Ocal^{\times}(U)$.
  Let $\alpha \in \HH^i(X,\Lambda(j))$ be given, and write $\alpha|_{U}$ for its image in $\HH^{i}(U,\Lambda(j))$ and $\alpha|_{Z}$ for its image in $\HH^{i}(Z,\Lambda(j))$.
  Then one has $\partial_{X,Z}(\kappa_{U}(f) \cup \alpha|_{U}) = \alpha|_{Z}$.
\end{fact}

\subsection{Divisorial valuations}
Suppose that that $v$ is a \emph{divisorial valuation} of $K|k$, i.e.~$v$ arises from a prime divisor on some model of $K|k$.
Equivalently, $v$ satisfies the following properties:
\begin{enumerate}
  \item The value group $vK$ of $v$ is isomorphic (as an ordered abelian group) to $\Zbb$.
        Since $k$ is algebraically closed, this automatically implies that $v$ is trivial on $k$.
  \item The residue field $Kv$ of $v$ is finitely-generated over $k$, and it has transcendence degree $\trdeg(K|k)-1$ over $k$.
\end{enumerate}
In addition to the notations $vK$ for the value group and $Kv$ for the residue field, we will write $\Ocal_{v}$ for the valuation ring, $\mfrak_{v}$ for the valuation ideal, $\Urm_{v} := \Ocal_{v}^{\times}$ for the $v$-units and $\Urm_{v}^{1} := 1 + \mfrak_{v}$ for the principal $v$-units.

Let $X$ be a model of $K|k$.
We say that $X$ is a \emph{$v$-model} provided that the following conditions hold true:
\begin{enumerate}
  \item The valuation $v$ has a (necessarily unique) centre $\xi_{v}$ on $X$.
  \item The centre $\xi_{v}$ is a regular codimension one point in $X$.
\end{enumerate}
Given a $v$-model $X$ of $K|k$ with $v$-centre $\xi_{v}$, we write $X_{v}$ for the closure of $\xi_{v}$ in $X$.
An open $k$-subvariety of $X$ will be called \emph{$v$-open} provided that $\xi_{v} \in U$, or equivalently that $U \cap X_{v}$ is dense in $X_{v}$.
Note that any $v$-open $k$-subvariety $U$ of a $v$-model $X$ of $K|k$ is again a $v$-model, and one has $U \cap X_{v} = U_{v}$.
If $X$ is any $v$-model of $K|k$, we define
\[ \HH^{i}(\Ocal_{v}|k,\Lambda(j)) := \varinjlim_{U} \HH^{i}(U,\Lambda(j)) \]
where $U$ varies over the $v$-open $k$-subvarieties of $X$.
This clearly does not depend on the choice of $v$-model $X$.

\subsection{Residues for divisorial valuations}

Let $v$ be a divisorial valuation of $K|k$ and $X$ a $v$-model of $K|k$.
The restriction maps
\[ \HH^{i}(X,\Lambda(j)) \to \HH^{i}(X \smin X_{v},\Lambda(j)), \ \HH^{i}(X,\Lambda(j)) \to \HH^{j}(X_{v},\Lambda(j)) \]
are compatible with passage to $v$-open $k$-subvarieties of $X$.
Passing to the colimit over the $v$-open $k$-subvarieties of $X$, we obtain canonical maps
\[ \ufrak_{v} : \HH^{i}(\Ocal_{v}|k,\Lambda(j)) \to \HH^{i}(K|k,\Lambda(j)), \ \sfrak_{v} : \HH^{i}(\Ocal_{v}|k,\Lambda(j)) \to \HH^{i}(Kv|k,\Lambda(j)). \]
Similarly, the residue maps $\partial_{X,X_{v}}$ are compatible with passage to $v$-open $k$-subvarieties of $X$, and, passing to the colimit, we thereby obtain a \emph{residue map} associated to $v$:
\[ \partial_{v} : \HH^{i+1}(K|k,\Lambda(j+1)) \to \HH^{i}(Kv|k,\Lambda(j)). \]
Fact~\ref{fact:residue-calc} then translates, in this birational context, to the following.
\begin{fact}\label{fact:birational-residue-calc}
  Let $v$ be a divisorial valuation of $K|k$ and $\varpi \in K^{\times}$ a uniformizer of $v$.
  Let $\alpha \in \HH^{i}(\Ocal_{v}|k,\Lambda(j))$ be given.
  Then one has
  \[ \partial_{v}(\kappa_{K}(\varpi) \cup \ufrak_{v}\alpha) = \sfrak_{v} \alpha, \]
  as elements of $\HH^{i}(Kv|k,\Lambda(j))$.
\end{fact}
\begin{proof}
  This follows from Fact~\ref{fact:residue-calc} since we can find some $v$-model $X$ of $K|k$ such that $\varpi \in \Ocal(X)$ and $X_{v}$ is the zero-locus of $\varpi$.
\end{proof}

To put Fact~\ref{fact:birational-residue-calc} in the right perspective, recall the existence of the \emph{tame-symbol} in Milnor K-theory associated to a divisorial valuation $v$ of $K|k$.
This is a morphism
\[ \partial_{v} : \KM_{r+1}(K) \to \KM_{r}(Kv) \]
which is uniquely characterized by the formula:
\[ \partial_{v} \{\varpi,u_{1},\ldots,u_{r}\} = \{\bar u_{1}, \ldots, \bar u_{r}\}, \]
where $\varpi$ is a uniformizer of $v$, $u_{1},\ldots,u_{r} \in \Urm_{v}$, and $\bar u_{i}$ denotes the image of $u_{i}$ in $Kv^{\times}$.
The residue maps described above are compatible with these tame symbols in the sense that the following diagram commutes:
\begin{equation}
  \begin{tikzcd}
    \KM_{r+1}(K) \ar[r,"\partial_{v}"] \ar[d,"\kappa_{K}^{r+1}"'] & \KM_{r}(Kv) \ar[d,"\kappa_{Kv}^{r}"] \\
    \HH^{r+1}(K|k,\Lambda(r+1)) \ar[r,"\partial_{v}"'] & \HH^{r}(Kv|k,\Lambda(r)).
  \end{tikzcd}
\end{equation}

\section{Algebraic dependence}\label{section:alg-dep}

In this section we discuss the relationship between the cohomological structures described above and algebraic (in)dependence in function fields.
Throughout this section we work with a fixed function field $K|k$.

\subsection{Chomological dimension}\label{subsec:cohom-dim}

Recall that the \emph{Andreotti-Frankel Theorem}~\cite{MR0177422} combined with the universal coefficient theorem shows that $\HH^i(X,\Lambda(j)) = 0$ whenever $X$ is a smooth affine $k$-variety of dimension smaller than $i$.
We immediately obtain the following consequence.

\begin{fact}\label{fact:cohom-dim}
  One has $\HH^i(K|k,\Lambda(j)) = 0$ for all $i > \trdeg(K|k)$.
\end{fact}

This bound on the cohomological dimension of $K|k$ is sharp, as the following lemma shows.
\begin{lemma}\label{lemma:cup-alg-indep}
  Let $f_{1},\ldots,f_{r} \in K^{\times}$ be given.
  The following are equivalent:
  \begin{enumerate}
    \item The element $\kappa_{K}^{r}\{f_{1},\ldots,f_{r}\} \in \HH^{r}(K|k,\Lambda(r))$ vanishes.
    \item The element $\kappa_{K}^{r}\{f_{1},\ldots,f_{r}\} \in \HH^{r}(K|k,\Lambda(r))$ is $\Lambda$-torsion.
    \item The elements $f_{1},\ldots,f_{r} \in K^{\times}$ are algebraically dependent over $k$.
  \end{enumerate}
\end{lemma}
\begin{proof}
  The implication $(3) \Rightarrow (1)$ follows from Fact~\ref{fact:cohom-dim} while $(1) \Rightarrow (2)$ is tautological.
  To conclude, assume that $f_{1},\ldots,f_{r} \in K^{\times}$ are algebraically indepdendent over $k$.
  We will show that $\kappa_{K}^{r}\{f_{1},\ldots,f_{r}\}$ is non-$\Lambda$-torsion in $\HH^{r}(K|k,\Lambda(r))$.
  We proceed by induction on $r$ with $r = 0$ being trivial.
  For the inductive case, choose a divisorial valuation $v$ of $K|k$ which has the following properties:
  \begin{enumerate}
    \item One has $v(f_{1}) \neq 0 = v(f_{2}) = \cdots = v(f_{r})$.
    \item Letting $\bar f_{i}$, $i = 2,\ldots,r$, denote the image of $f_{i}$ in $Kv$, the elements $\bar f_{2},\ldots,\bar f_{r}$ are algebraically indepdendent in $Kv|k$.
  \end{enumerate}
  We then have
  \[ \partial_{v}(\kappa_{K}^{r}\{f_{1},\ldots,f_{r}\}) = v(f_{1}) \cdot \kappa_{Kv}^{r-1}\{\bar f_{2},\ldots,\bar f_{r}\}. \]
  Since $\Lambda$ is a subring of $\Qbb$, this element is non-$\Lambda$-torsion by our inductive hypothesis, and thus the same holds for $\kappa_{K}^{r}\{f_{1},\ldots,f_{r}\}$.
\end{proof}

\subsection{Good models}

Let $K|k$ be a function field and $L$ a subextension of $K|k$ which is relatively algebraically closed in $K$.
We say that a model $X \to B$ of $\iota : L \hookrightarrow K$ is \emph{good} provided that:
\begin{enumerate}
  \item The $k$-varieties $X$, $B$, and the morphism $X \to B$ are all smooth.
  \item The fibres of $X \to B$ are all geometrically integral.
  \item The induced map $X^{\an} \to B^{\an}$ on topological spaces is a fibre bundle.
\end{enumerate}
Clearly, if $f : X \to B$ is a good model, and $U$ is a nonempty $k$-open subvariety in $B$, then the restriction $f^{-1}(U) \to U$ is again good.

The good models of $\iota : L \hookrightarrow K$ are cofinal among all models.
Indeed, if $f : X \to B$ is any model of $\iota$, then $f$ has generically geometrically integral fibres because $L$ was assumed to be relatively algebraically closed in $K$.
Hence the models of $\iota$ satisfying (1) and (2) above are cofinal among all models.
Moreover, if $f : X \to B$ is any model of $\iota$ satisfying (1) and (2), then there exists a nonempty open $k$-subvariety $U$ of $B$ such that the restriction $f^{-1}(U) \to U$ is a good model (see~\cite{MR481096}*{Corollaire 5.1}).

\subsection{Geometric submodules}\label{subsection:geom-submod}

Let $L$ be a subextension of $K|k$ which is algebraically closed in $K$.
In this subsection we study the map induced by the inclusion $L \hookrightarrow K$ on $\HH^{1}$.

\begin{lemma}\label{lemma:geom-submod-incl}
  Let $L$ be a subextension of $K|k$ which is relatively algebraically closed in $K$.
  Then the canonical map
  \[ \HH^{1}(L|k,\Lambda(1)) \to \HH^{1}(K|k,\Lambda(1)) \]
  is injective.
\end{lemma}
\begin{proof}
  Suppose $\alpha$ is in the kernel of this map, and let $X \to B$ be a good model of $L \hookrightarrow K$ such that $\alpha \in \HH^{1}(B,\Lambda(1))$.
  Since $X^{\an} \to B^{\an}$ is a fibre bundle, the map
  \[ \HH^{1}(B,\Lambda(1)) \to \HH^{1}(X,\Lambda(1)) \]
  is injective.
  Since $X$ is smooth, the map $\HH^{1}(X,\Lambda(1)) \to \HH^{1}(K|k,\Lambda(1))$ is injective as well.
  Hence $\alpha = 0$.
\end{proof}

\begin{proposition}\label{proposition:geom-submod-cup}
  Let $L$ be a subextension of $K|k$ which is relatively algebraically closed in $K$ and let $\alpha \in \HH^{1}(K|k,\Lambda(1))$ be given.
  Assume that $\alpha$ is not contained in the image of the injective map
  \[ \HH^{1}(L|k,\Lambda(1)) \to \HH^{1}(K|k,\Lambda(1)). \]
  Then there exists a smooth model $B$ of $L|k$ such that for all closed points $b \in B$ and all systems of regular parameters $(f_{1},\ldots,f_{r})$ of $\Ocal_{B,b}$, the element $\kappa_{K}^{r}\{f_{1},\ldots,f_{r}\} \cup \alpha$ is non-$\Lambda$-torsion (in particular, nontrivial) in $\HH^{r+1}(K|k,\Lambda(r+1))$.
\end{proposition}
\begin{proof}
  Choose a good model $f : X \to B$ of $L \hookrightarrow K$ with $\alpha \in \HH^{1}(X,\Lambda(1))$.
  Let $b \in B$ be a closed point and $f_{1},\ldots,f_{r}$ a system of regular parameters at $b \in B$.
  Let $\xi$ denote the generic point of $Z := f^{-1}(b)$, and note that $f_{1},\ldots,f_{r}$ are a system of regular parameters at $\xi \in X$.
  Replacing $X$ resp.~$B$ with a sufficiently small neighborhood of $\xi$ resp.~$b$, we may also assume that the following conditions hold true:
  \begin{enumerate}
    \item One has $f_{1},\ldots,f_{r} \in \Ocal(B)$.
          Let $W_{i}$ denote the zero-locus of $(f_{1},\ldots,f_{i})$ in $B$ and $Z_{i}$ the zero locus of $(f_{1},\ldots,f_{i})$ in $X$.
          Put $W_{0} := B$ and $Z_{0} := X$.
    \item $W_{0} \supsetneq W_{1} \supsetneq \cdots \supsetneq W_{r}$ is a flag of smooth integral closed subvarieties of $W_{0} = B$ with $W_{i+1}$ having codimension one in $W_{i}$.
    \item $Z_{0} \supsetneq Z_{1} \supsetneq \cdots \supsetneq Z_{r}$ is a flag of smooth integral closed subvarieties of $Z_{0} = X$ with $Z_{i+1}$ having codimension one in $Z_{i}$.
    \item For all $i = 0,\ldots,r-1$, the function $f_{i+1}$ is a regular parameter for the generic point of $W_{i+1}$ in $W_{i}$, and similarly for the generic point of $Z_{i+1}$ in $Z_{i}$.
  \end{enumerate}
  Put $\partial_{i} := \partial_{Z_{i-1},Z_{i}}$ for $i = 1,\ldots,r$.
  Applying Fact~\ref{fact:residue-calc} successively $r$ times, we find
  \[ (\partial_{r} \circ  \partial_{r-1} \circ \cdots \circ \partial_{1})(\kappa_{Z_{0} \smin Z_{1}}(f_{1}) \cup \cdots \cup \kappa_{Z_{r-1} \smin Z_{r}}(f_{r}) \cup \alpha) = \beta, \]
  where $\beta$ denotes the image of $\alpha$ in $\HH^{1}(Z,\Lambda(1))$.
  This map to $\HH^{1}(Z,\Lambda(1))$ fits in an exact sequence of the form
  \[ 0 \to \HH^{1}(B,\Lambda(1)) \to \HH^{1}(X,\Lambda(1)) \to \HH^{1}(Z,\Lambda(1)) \]
  since $X^{\an} \to B^{\an}$ is a fibre bundle.
  Passing to the colimit over good models of $L \hookrightarrow K$, we obtain a similar exact sequence
  \[ 0 \to \HH^{1}(L|k,\Lambda(1)) \to \HH^{1}(K|k,\Lambda(1)) \to \HH^{1}(k(Z)|k,\Lambda(1)). \]
  Writing $v_{i}$ for the divisorial valuation on $k(Z_{i-1})$ associated to $Z_{i}$, the above calculation shows that
  \[ (\partial_{v_{r}} \circ \cdots \circ \partial_{v_{1}})(\kappa_{K}^{r}\{f_{1},\ldots,f_{r}\} \cup \alpha) \]
  agrees with the image of $\alpha$ in $\HH^{1}(k(Z)|k,\Lambda(1))$.
  The assertion follows since this element is nontrivial, and $\HH^{1}(k(Z)|k,\Lambda(1))$ is torsion-free.
\end{proof}

\section{Picard $1$-motives}\label{section:picard}

Recall that a \emph{$1$-motive} over $k$ consists of the following data:
\begin{enumerate}
  \item A semiabelian $k$-variety $\Gbf$.
  \item A finitely-generated free abelian group $\Lbf$.
  \item A morphism $\Lbf \to \Gbf(k)$.
\end{enumerate}
This data is summarized as a complex $[\Lbf \to \Gbf]$ of group schemes over $k$ where $\Lbf$ is placed in degree $0$ and $\Gbf$ in degree $-1$.
A morphism of $1$-motives is simply a morphism of such complexes.
The set of morphisms $\Hom_{k}(\Mbf_{1},\Mbf_{2})$ between two $1$-motives over $k$ naturally forms an abelian group.

\subsection{Hodge realizations}\label{subsection:hodge-realization}

Let $\Mbf = [\Lbf \to \Gbf]$ be a $1$-motive over $k$.
The \emph{Hodge realization} of $\Mbf$, denoted $\HH(\Mbf)$, is constructed as follows (see~\cite{MR0498552}*{\S 10}).
First, consider the exponential sequence
\[ 0 \to \HH_{1}(\Gbf^{\an},\Zbb) \to \Lie\Gbf^{\an} \to \Gbf^{\an} \to 0. \]
Pull back this sequence with respect to the map $\Lbf \to \Gbf(k) \hookrightarrow \Gbf^{\an}$ to obtain $\HH(\Mbf)$ which then fits in an exact sequence of the form
\[ 0 \to \HH_{1}(\Gbf^{\an},\Zbb) \to \HH(\Mbf) \to \Lbf \to 0. \]
The mixed Hodge structure of $\HH(\Mbf)$ is the unique one making the exact sequence above compatible with the usual mixed Hodge structure on $\HH_{1}(\Gbf^{\an},\Zbb)$, the trivial Hodge structure on $\Lbf$, where $\Frm^{0}(\HH(\Mbf) \otimes \Cbb)$ is the kernel of the induced map $\HH(\Mbf) \otimes_\Zbb \Cbb \rightarrow \Lie \Gbf^\an$ obtained from the construction of $\HH(\Mbf)$.
We will need the following lemma, which is the simplest case of Deligne's much more general result~\cite{MR0498552}*{10.1.3}.

\begin{lemma}\label{lemma:hodge-realization-iso}
  Let $\Mbf = [\Lbf \to \Abf]$ be a $1$-motive over $k$ with $\Abf$ an abelian variety.
  Consider the $1$-motive $\Zbb := [\Zbb \to 0]$.
  Then the trivial Hodge structure $\Zbb$ is isomorphic to $\HH(\Zbb)$, and $\HH(-)$ induces an isomorphism $\Hom_{k}(\Zbb,\Mbf) \cong \Hom_{\MHS}(\Zbb,\HH(\Mbf))$.
\end{lemma}
\begin{proof}
  The fact that $\Zbb$ (as a mixed Hodge structure) agrees with $\HH(\Zbb)$ follows directly from the definitions.
  View $\Abf$ and $\Lbf$ as the $1$-motives $[0 \to \Abf]$ and $[\Lbf \to 0]$ respectively.
  We have an exact sequence of $1$-motives,
  \[ 0 \to \Abf \to \Mbf \to \Lbf \to 0, \]
  and a corresponding exact sequence of Hodge realizations:
  \[ 0 \rightarrow \HH(\Abf) \rightarrow \HH(\Mbf) \rightarrow \HH(\Lbf) \rightarrow 0. \]
  Note $\HH(\Abf) = \HH_{1}(\Abf^{\an},\Zbb)$ and $\HH(\Lbf) = \Lbf$.

  The lemma is easily verified in the case where $\Mbf = \Abf$ or $\Mbf = \Lbf$.
  Indeed, the yoga of weights ensures $\Hom_{\MHS}(\Zbb,\HH(\Abf)) = 0$, while $\Hom_{k}(\Zbb,\Abf) = 0$ by definition.
  On the other hand $\Hom_{k}(\Zbb,\Lbf) = \Lbf$ while $\Hom_{\MHS}(\Zbb,\Lbf) = \Lbf$ and the map in question is the obvious isomorphism.

  A simple diagram chase shows it is enough to prove that the map
  \[ \Ext^{1}_{k}(\Zbb,\Abf) \to \Ext^{1}_{\MHS}(\Zbb,\HH(\Abf)) \]
  induced by $\HH(-)$ is injective, where the $\Ext^{1}_{k}$ refers to extensions of $1$-motives over $k$.
  One has $\Ext^{1}_{k}(\Zbb,\Abf) = \Abf(k)$, merely as a consequence of the definitions.
  Also, it is well-known that one has $\Ext^{1}_{\MHS}(\Zbb,\HH(\Abf)) = \Abf^{\an}$ (see~\cite{MR605338}), and the corresponding map from $\Abf(k)$ to $\Abf^{\an} = \Abf(\Cbb)$ is the usual (injective) inclusion.
  This concludes the proof of the lemma.
\end{proof}

\subsection{Picard 1-motives}\label{subsection:picard-1-motives}

Let $X$ be a smooth projective integral $k$-variety, and $U$ a nonempty open $k$-subvariety of $X$.
Put $Z := X \smin U$.
Consider $\Div^{0}(X)$, the group of algebraically trivial Weil-divisors on $X$, as well as the subgroup $\Div^{0}_{Z}(X)$ of algebraically trivial Weil divisors on $X$ which are supported on $Z$.
Note $\Div^{0}_{Z}(X)$ is a finitely-generated free abelian group.

Next, consider the Picard variety $\Picbf^{0}_{X}$ of $X$.
This is an abelian $k$-variety, and we have a canonical morphism
\[ \Div^{0}_{Z}(X) \hookrightarrow \Div^{0}(X) \to \Pic^{0}(X) = \Picbf^{0}_{X}(k), \]
mapping a Weil divisor to its associated line bundle.
We thereby obtain the so-called \emph{Picard $1$-motive} of $U$ (associated to the inclusion $U \hookrightarrow X$):
\[ \Mbf^{1,1}(U) := [\Div^{0}_{Z}(X) \to \Picbf^{0}_{X}]. \]
Whenever $V \subset U$ is a nonempty open $k$-subvariety, we obtain a canonical morphism
\[ \Mbf^{1,1}(U) \rightarrow \Mbf^{1,1}(V) \]
of $1$-motives over $k$, which simply arises from the inclusion $\Div^{0}_{X \smin U}(X) \hookrightarrow \Div^{0}_{X \smin V}(X)$.

The following theorem, due to Barbieri-Viale, Srinivas~\cite{MR1891270} and Ramachandran~\cite{MR1837235} describe the Hodge realization of such Picard $1$-motives.
\begin{theorem}[\cite{MR1891270}*{Theorem 4.7},~\cite{MR1837235}*{Theorem 2.5}]\label{theorem:picard/picard-hodge}
  In the above context, there is a canonical isomorphism of mixed Hodge structures $\HH(\Mbf^{1,1}(U)) \cong \HH^1(U,\Zbb(1))$ which is functorial with respect to open embeddings $V \hookrightarrow U$ of open $k$-subvarieties of $X$.
\end{theorem}

\section{An anabelian result}\label{section:anab}

In this section, we discuss an anabelian result from which we will eventually deduce our main theorem.
Write $\Lambda_{\neq 0}$ for the set of nonzero elements of $\Lambda$.
Since $\Lambda$ is a subring of $\Qbb$, we see that for every $x \in \KL(K|k)$, there exists some $t \in K^{\times}$ such that $t^{\circ} \in \Lambda_{\neq 0} \cdot x$.
Given two elements $x,y \in \KL(K|k)$ and elements $u,v \in K^{\times}$ such that $u^{\circ} \in \Lambda_{\neq 0} \cdot x$ and $v^{\circ} \in \Lambda_{\neq 0} \cdot y$, we say that $x,y$ are \emph{(in)dependent} provided that $u,v$ are algebraically (in)dependent over $k$.
It is easy to see that this definition does not depend on the choice of $u,v$ as above, and that $x,y$ are dependent if and only if they are not independent (this again relies on the assumption that $\Lambda \subset \Qbb$).

For a subextension $M$ of $K|k$, the canonical map
\[ \KL(M|k) \to \KL(K|k) \]
is injective since $\Lambda$ is flat over $\Zbb$.
We will tacitly identify $\KL(M|k)$ with its image in $\KL(K|k)$ with respect to this inclusion.
For a subset $S$ of $K$, we write
\[ \acl_{K}(S) := \bar{k(S)} \cap K \]
for the relative algebraic closure of $k(S)$ in $K$.
We say that a submodule $\Kscr$ of $\KL(K|k)$ is \emph{rational} provided that there exists some $t \in K \smin k$ such that $\acl_{K}(t) = k(t)$ and such that $\Kscr = \KL(k(t)|k)$.

Next suppose that $L|l$ is a further function field over an algebraically closed field $l$ of characteristic $0$, and let
\[ \phi : \KL(K|k) \cong \KL(L|l) \]
be an isomorphism of $\Lambda$-modules.
We say that
\begin{enumerate}
  \item \emph{$\phi$ is compatible with $\acl$} provided that for all $x,y \in \KL(K|k)$, the pair $x,y$ is dependent in $\KL(K|k)$ if and only if the pair $\phi x, \phi y$ is dependent in $\KL(L|l)$.
  \item \emph{$\phi$ is compatible with rational submodules} provided that $\phi$ induces a bijection on rational submodules of $\KL(K|k)$ resp.~$\KL(L|l)$.
\end{enumerate}
The collection of all isomorphisms $\KL(K|k) \cong \KL(L|l)$ which are compatible with $\acl$ and with rational submodules will be denoted by
\[ \Isom^{\acl}_{\rat}(\KL(K|k),\KL(L|l)). \]
Note that for any $\phi \in \Isom^{\acl}_{\rat}(\KL(K|k),\KL(L|l))$ and $\epsilon \in \Lambda^{\times}$, the corresponding isomorphism $\epsilon \cdot \phi$ is again compatible with $\acl$ and with rational submodules.
We thus obtain an action of $\Lambda^{\times}$ on $\Isom^{\acl}_{\rat}(\KL(K|k),\KL(L|l))$, and we denote its orbits by
\[ \Isommod^{\acl}_{\rat}(\KL(K|k),\KL(L|l)). \]
Any isomorphism of fields $K \cong L$ restricts to an isomorphism $k \cong l$, hence we obtain a canonical map
\[ \Isom(K,L) \to \Isom^{\acl}_{\rat}(\KL(K|k),\KL(L|l)) \twoheadrightarrow \Isommod^{\acl}_{\rat}(\KL(K|k),\KL(L|l)), \]
which is the focus of the following main result of this section.

\begin{theorem}\label{theorem:main-anab}
  In the above context, assume furthermore that $\trdeg(K|k) \geq 2$.
  Then the canonical map
  \[ \Isom(K,L) \rightarrow \Isommod^{\acl}_{\rat}(\KL(K|k),\KL(L|l)) \]
  is a bijection.
\end{theorem}

We stated Theorem~\ref{theorem:main-anab} as a theorem because it may be of independent interest.
However, it can be deduced from known results in the literature in certain special cases.
For example, if $\Lambda = \Zbb$, this theorem follows from the main results of Bogomolov-Tschinkel~\cite{MR2537087} and/or Cadoret-Pirutka~\cite{CadoretPirutka}.
If $\Lambda$ is a \emph{proper} subring of $\Qbb$, then this theorem can be deduced from the work of Pop~\cite{MR2867932}.
And finally, if $\trdeg(K|k) \geq 5$, then one can deduce this result from the work of Evans-Hrushovski~\cites{MR1078211,MR1356137} and Gismatullin~\cite{MR2439644}, along with arguments similar to the ones appearing below.
Moreover, in all of these cases the condition of \emph{compatibility with rational submodules} can be removed.

In this respect, the most interesting case of Theorem~\ref{theorem:main-anab} is where $\Lambda = \Qbb$, and where one considers function fields of transcendence degree $\geq 2$.
In such cases, we do not know of a straightforward way to deduce this result from what has appeared in the literature.

The rest of this section is devoted to proving Theorem~\ref{theorem:main-anab}, and the bulk of the proof is devoted to constructing a (functorial) left inverse of the map appearing in its statement.
For the rest of this section, we put ourselves in the context of Theorem~\ref{theorem:main-anab} and fix an element $\phi \in \Isom^{\acl}_{\rat}(\KL(K|k),\KL(L|l))$.

\subsection{Divisorial valuations}\label{subsection:divisorial-valuations}

For a divisorial valuation $v$ of $K|k$, we write
\[ \Uscr_{v} := \image((\Urm_{v}/k^{\times}) \otimes_{\Zbb} \Lambda \to \KL(K|k)), \ \Uscr_{v}^{1} := \image((\Urm_{v}^{1} \cdot k^{\times}/k^{\times}) \otimes_{\Zbb} \Lambda \to \KL(K|k)). \]
The maps in the formula above are the ones induced by the inclusions $\Urm_{v} \hookrightarrow K^{\times}$ and $\Urm_{v}^{1} \hookrightarrow K^{\times}$, respectively.
Note that $\Uscr_{v}^{1} \subset \Uscr_{v} \subset \KL(K|k)$, and that the map $\Urm_{v} \to {(Kv)}^{\times}$ induces an isomorphism $\Uscr_{v}/\Uscr_{v}^{1} \cong \KL(Kv|k)$.

We will need to use a variant of the \emph{local theory} from almost-abelian anabelian geometry in order to show the compatibility of $\phi$ with divisorial valuations.
Such local theories have been extensively developed, see~\cites{MR1977585,MR2735055,TopazCrelle,MR3552293} for instance.
However, the precise statement we need in our context has not appeared in the literature.
For the sake of completeness, we provide the details for the local theory needed here in an appendix to this paper.
We summarize the precise result we need in the following fact, which follows directly from Theorem~\ref{theorem:localtheory} in the appendix.

\begin{fact}\label{fact:anab/local-theory}
  In the above context, for all divisorial valuations $v$ of $K|k$, there exists a unique divisorial valuation $v^\phi$ of $L|l$ such that $\phi(\Uscr_v) = \Uscr_{v^\phi}$ and $\phi(\Uscr_v^1) = \Uscr_{v^\phi}^1$.
\end{fact}

\subsection{Rational submodules}\label{section:rational-submodules}

For $t \in K \smin k$, put
\[ \Kscr_{t} := \KL(\acl_{K}(t)|k) \]
considered as a submodule of $\KL(K|k)$.
\begin{lemma}\label{lemma:dim-one-geom}
  A submodule $\Kscr$ of $\KL(K|k)$ has the form $\Kscr_{t}$ if and only if it satisfies the following conditions:
  \begin{enumerate}
    \item The submodule $\Kscr$ is nontrivial.
    \item For all $\alpha \in \Kscr$ and $\beta \in \KL(K|k)$ dependent with $\alpha$, one has $\beta \in \Kscr$.
    \item The submodule $\Kscr$ is minimal among submodules of $\KL(K|k)$ satisfying (1) and (2).
  \end{enumerate}
\end{lemma}
\begin{proof}
  Note that a submodule $\Kscr$ has the form $\KL(M|k)$ for some relatively algebraically closed subextension $M$ of $K|k$ if and only if it satisfies condition (2).
  The assertion follows easily from this.
\end{proof}

We say that $t$ is \emph{general in $K|k$} provided that $K$ is regular over $k(t)$.
Note that if $t$ is general in $K|k$ then $\Kscr_{t}$ is a rational submodule of $\KL(K|k)$.
We will need the following \emph{birational Bertini} result.

\begin{fact}[Birational Bertini~\cite{MR0344244}*{Ch. VIII, pg. 213}]\label{fact:anab/birational-bertini}
  Let $x,y \in K$ be algebraically independent over $k$.
  For all but finitely many $a \in k$, the element $x+a \cdot y$ is general in $K|k$.
\end{fact}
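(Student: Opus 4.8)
The plan is to prove the Birational Bertini statement by reducing to the classical Bertini irreducibility theorem applied to a suitable pencil. Write $K = k(V)$ for some model $V$ of $K|k$. The element $x$ (being algebraically independent over $k$, hence in particular transcendental) defines a dominant rational map $x : V \dashrightarrow \Pbb^1$, and similarly $y : V \dashrightarrow \Pbb^1$; together they give a dominant rational map $(x,y) : V \dashrightarrow \Pbb^1 \times \Pbb^1$, whose image is a surface (since $x,y$ are algebraically independent). After restricting to a dense open, we may take $V$ smooth, and we may compose with the projection to the image so as to assume $(x,y) : V \rightarrow S$ is dominant onto a smooth surface $S$ with function field $k(x,y)$. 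The family of functions $x + a \cdot y$, $a \in k$, then corresponds to the pencil of curves on $S$ cut out by the lines $\{x + a y = c\}$ as $c$ varies — more precisely, for each $a$, the function $x + a \cdot y : V \dashrightarrow \Pbb^1$ factors through a dominant map $S \dashrightarrow \Pbb^1$, and the generic fibre of this map is geometrically the curve in question.

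The key step is then: for all but finitely many $a \in k$, the generic fibre of $x + a\cdot y : S \dashrightarrow \Pbb^1$ is geometrically integral, and moreover $V$ is regular (= geometrically integral as a $k(x+ay)$-variety, after passing to a dense open) over $k(x+ay)$. The first part is exactly classical Bertini irreducibility for the pencil $\{x + a y = c\}$ on $S$: a general member of a linear system whose generic member is irreducible and which is not composite with a pencil remains irreducible, and here one checks the pencil is not composite with another pencil precisely because $x$ and $y$ are algebraically independent (the map $(x,y)$ to the plane is dominant). The cited reference \cite{MR0344244}*{Ch.\ VIII} gives precisely this. Concretely: there is a nonempty open $\Abb^1 \smin \{a_1,\dots,a_n\}$ of parameters $a$ for which $k(S)$ is a regular extension of $k(x+ay)$, equivalently $k(x+ay)$ is relatively algebraically closed in $k(S)$ and $k(S)/k(x+ay)$ is separable (automatic in characteristic $0$).

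Finally I would propagate regularity from $S$ up to $V = K$: since $K = k(V)$ is the function field of a variety dominating $S$ and $K$ is finitely generated over $k(S)$, one reduces to checking that $k(x+ay)$ is relatively algebraically closed in $K$. But $\overline{k(x+ay)} \cap K \subseteq \overline{k(x+ay)} \cap \overline{k(S)} \cap K$, and $\overline{k(x+ay)} \cap k(S) = k(x+ay)$ for good $a$ by the previous paragraph; one then needs that the algebraic closure of $k(x+ay)$ inside $K$ is already contained in (a field conjugate to) $k(S)$ after further shrinking — this is the point where one may need to enlarge the finite bad set, possibly by invoking a version of Bertini for the map $V \to S$ itself, or more simply by noting $\acl_K(x+ay)$ has transcendence degree $1$ over $k$ and is therefore determined by its image in the function field of the relevant model. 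I expect this last bookkeeping — cleanly transferring "general" (a statement about $K$) from the surface $S$ back up to $K$, while keeping the exceptional set finite and uniform in $a$ — to be the main technical obstacle, though it is essentially the content of the classical reference and amounts to a spreading-out argument over the base $\Abb^1_a$ of parameters.
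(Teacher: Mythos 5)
The paper does not prove this statement at all: it is quoted as a known \emph{Fact} with a reference to Lang (Ch.~VIII, p.~213), so there is no internal proof to compare against. Judged on its own terms, your proposal has a genuine gap, and it is exactly the one you flag at the end. Being ``general in $K|k$'' means $k(x+ay)$ is relatively algebraically closed in $K$ (separability being automatic in characteristic $0$). Your Bertini argument is run on the surface $S$ with $k(S)=k(x,y)$, and therefore only controls $\overline{k(x+ay)}\cap k(x,y)$. But an element of $K$ algebraic over $k(x+ay)$ need not lie in $k(x,y)$; it only lies in $\acl_K(x,y)$, which is in general a proper finite extension of $k(x,y)$. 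So ``$k(x+ay)$ is relatively algebraically closed in $k(x,y)$'' does not imply the conclusion, and the sentence in which you try to push regularity up from $S$ to $K$ does not establish anything: the inclusion $\overline{k(x+ay)}\cap K\subseteq \overline{k(x+ay)}\cap\overline{k(S)}\cap K$ is vacuous, and the appeal to ``$\acl_K(x+ay)$ is determined by its image in the function field of the relevant model'' is precisely the point at issue, not an argument for it.

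The gap is closable with one observation you are missing: since any $z\in K$ algebraic over $k(x+ay)$ is algebraic over $k(x,y)$, one has $\acl_K(x+ay)=\acl_N(x+ay)$ where $N:=\acl_K(x,y)$, and $N$ is a function field of transcendence degree $2$, finite over $k(x,y)$. So one may replace $K$ by $N$ and run the Bertini argument on a smooth projective surface $S'$ with $k(S')=N$ (a surface mapping generically finitely onto your $S$), not on $S$ itself. On $S'$ the net spanned by $1,x,y$ defines a generically finite dominant rational map to $\Pbb^2$ (because $x,y$ are algebraically independent), hence is not composite with a pencil, and Bertini's irreducibility theorem gives that the member $\{x+ay=c\}$ over the generic point $(a,c)$ of the parameter space is geometrically integral; constructibility of the non-integral locus over $\Abb^1_a$ then yields a finite exceptional set of $a$, for each of which outside it the generic fibre of $x+ay\colon S'\dashrightarrow \Abb^1$ is geometrically integral, i.e.\ $N$ (hence $K$) is regular over $k(x+ay)$. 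With that modification your argument becomes the standard proof; as written, the key step fails on the wrong surface.
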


\subsection{Divisors on curves}

Let $t \in K \smin k$ be given, and put $\Kscr := \Kscr_{t}$.
Consider the following collection of submodules of $\Kscr$:
\[ \Dscr_{t} = \Dscr_{\Kscr} := \{ \Uscr_{v} \cap \Kscr \ | \ \Kscr \not\subset \Uscr_{v} \} \]
where $v$ varies over the divisorial valuations of $K|k$.
Also write $\Dbb_{t} = \Dbb_{\acl_{K}(t)}$ for the collection of divisorial valuations of $\acl_{K}(t)|k$, so that $\Dbb_{t}$ is in bijection with the closed points of the unique projective normal model of $\acl_{K}(t)|k$.

\begin{lemma}\label{lemma:curve-divisor-bijection}
  In the above context, the following hold:
  \begin{enumerate}
    \item For all $\Uscr \in \Dscr_{t}$, the quotient $\Kscr/\Uscr$ is isomorphic to $\Lambda$.
    \item One has a canonical bijection $\Dbb_{t} \cong \Dscr_{t}$ defined by $w \mapsto \Uscr_{w}$, $w \in \Dbb_{t}$ with inverse sending $\Uscr = \Uscr_{v} \cap \Kscr$ to the restriction of $v$ to $\acl_{K}(t)$.
  \end{enumerate}
\end{lemma}
\begin{proof}
  For (1), let $v$ be a divisorial valuation of $K|k$ with $\Kscr \not\subset \Uscr_{v}$ and put $\Uscr = \Uscr_{v} \cap \Kscr$.
  Note that $\KL(K|k)/\Uscr_{v}$ is isomorphic to $\Lambda$, hence one has a canonical injective morphism of $\Lambda$-modules
  \[ \Kscr/\Uscr \hookrightarrow \KL(K|k)/\Uscr_{v} \cong \Lambda. \]
  The image of this map is nontrivial by assumption and since $\Lambda$ is a subring of $\Qbb$, hence a PID, assertion (1) follows.

  For (2), put $M := \acl_{K}(t)$ and suppose first that $w$ is a divisorial valuation of $M|k$.
  Then there exists a divisorial valuation $v$ of $K|k$ whose restriction to $M$ is $w$, hence $\Uscr_{w} \subset \Uscr_{v} \cap \Kscr$ while $\Kscr \not\subset \Uscr_{v}$.
  As both $\Kscr/\Uscr_{w}$ and $\Kscr/\Uscr_{v} \cap \Kscr$ are isomorphic to $\Lambda$ and $\Kscr/\Uscr_{v} \cap \Kscr$ is a quotient of $\Kscr/\Uscr_{w}$, it follows that $\Uscr_{w} = \Uscr_{v} \cap \Kscr \in \Dscr_{t}$.

  Similarly if $\Uscr$ is an element of $\Dscr_{t}$, and $v$ is a divisorial valuation of $K|k$ with $\Uscr = \Uscr_{v} \cap \Kscr$ and $\Kscr \not\subset \Uscr_{v}$, then the restriction $w$ of $v$ to $M$ is a divisorial valuation of $M|k$ with $\Uscr_{w} \subset \Uscr_{v} \cap \Kscr$.
  Arguing as above, we find again that $\Uscr_{w} = \Uscr_{v} \cap \Kscr = \Uscr$.
\end{proof}

\subsection{Rational-like collections}\label{subsection:rational-like-collections}

Assume now that $t$ is a general element of $K|k$ so that $\Kscr := \Kscr_{t}$ is a rational submodule of $\KL(K|k)$.
By Lemma~\ref{lemma:curve-divisor-bijection}, we have $\Kscr/\Uscr \cong \Lambda$ for every $\Uscr \in \Dscr_{t}$.
Consider a collection of such isomorphisms:
\[ \Phi = {(\Phi_{\Uscr} : \Kscr/\Uscr \xrightarrow{\cong} \Lambda)}_{\Uscr \in \Dscr_{t}}. \]
Any element of $\Kscr$ is contained in all but finitely many of the $\Uscr \in \Dscr_{t}$ by Lemma~\ref{lemma:curve-divisor-bijection}, hence $\Phi$ induces a canonical map
\[ \div_{\Phi} : \Kscr \to \bigoplus_{\Uscr \in \Dscr_{t}} \Lambda \cdot [\Uscr], \ \ \div_{\Phi}(x) = \sum_{\Uscr \in \Dscr_{t}} \Phi_{\Uscr}(x) \cdot [\Uscr]. \]
Here $[\Uscr]$, $\Uscr \in \Dscr_{t}$, denote formal basis elements for the direct sum.

We say that $\Phi$ is a \emph{rational-like collection} provided that $\div_{\Phi}$ fits in a short exact sequence of the form
\[ 0 \to \Kscr \xrightarrow{\div_{\Phi}} \bigoplus_{\Uscr \in \Dscr_{t}} \Lambda \cdot [\Uscr] \xrightarrow{\sum} \Lambda \to 0. \]
If $\Phi$ is a rational-like collection and $\epsilon \in \Lambda^{\times}$ is given, then we obtain another rational-like collection
\[ \epsilon \cdot \Phi := {(\epsilon \cdot \Phi_{\Uscr})}_{\Uscr \in \Dscr_{t}}. \]

By Lemma~\ref{lemma:curve-divisor-bijection}, there is a \emph{canonical} rational-like collection for $\Kscr$, constructed from the field structure of $M := \acl_{K}(t) = k(t)$, as follows.
For $\Uscr \in \Dscr_{t}$ and $w \in \Dbb_{t}$ such that $\Uscr = \Uscr_{w}$, the isomorphism $\Phi^{\can}_{\Uscr}$ is the unique one making the following diagram commute:
\[
  \begin{tikzcd}
    \Kscr \ar[d, two heads] \ar[r,equal] & (M^\times/k^\times) \otimes_\Zbb \Lambda  \ar[r,"w \otimes \Lambda"] & \Zbb \otimes_\Zbb \Lambda = \Lambda \\
    \Kscr/\Uscr \ar[rr,equal] & {} & \Kscr/\Uscr. \ar[u,"\Phi_{\Uscr}^{\can}"']
  \end{tikzcd}
\]

Write $\Phi^{\can}_{\Kscr} := {(\Phi^{\can}_{\Uscr})}_{\Uscr \in \Dscr_{\Kscr}}$.
This is clearly a rational-like collection, which we call \emph{the canonical rational-like collection of $\Kscr$}.
We simplify the notation by writing $\div_{\can} := \div_{\Phi^{\can}_{\Kscr}}$ if $\Kscr$ is understood from context.

\begin{lemma}\label{lemma:rational-like-canon}
  In the above context, let $\Phi$ be a rational-like collection for $\Kscr$.
  Then there exists a unique $\epsilon = \epsilon_{\Kscr} \in \Lambda^{\times}$ such that $\Phi = \epsilon \cdot \Phi^{\can}_{\Kscr}$.
\end{lemma}
\begin{proof}
  By Lemma~\ref{lemma:curve-divisor-bijection}, for each $\Uscr \in \Dscr_{\Kscr}$, we may choose an $\epsilon_{\Uscr} \in \Lambda^{\times}$ such that
  \[ \Phi_{\Uscr} = \epsilon_{\Uscr} \cdot \Phi^{\can}_{\Uscr}. \]
  We must show that $\epsilon_{\Uscr}$ is independent of the choice of $\Uscr$.
  For two different $\Uscr,\Vscr \in \Dscr_{\Kscr}$, there exists a unique $x \in \Kscr$ such that
  \[ \div_{\can}(x) = [\Uscr] - [\Vscr]. \]
  Hence $\div_{\Phi}(x) = \epsilon_{\Uscr} \cdot [\Uscr] - \epsilon_{\Vscr} \cdot [\Vscr]$.
  The ``exactness'' in the definition of a rational-like collection (applied to $\Phi$) shows that $\epsilon_{\Uscr} - \epsilon_{\Vscr} = 0$.
  Hence $\epsilon_{\Uscr}$ does not depend on $\Uscr$.
\end{proof}

\subsection{Rational synchronization}\label{subsection:rational-syncro}

Let $t$ be a general element of $K|k$.
By Lemma~\ref{lemma:curve-divisor-bijection}, $\Dscr_{t}$ is parameterized by $\Pbb^{1}(k) = k \cup \{\infty\}$ by identifying $a \in k$ resp.~$\infty$ with $\Uscr_{w}$ with $w$ the divisorial valuation whose centre is $t = a$ resp.~$t = \infty$.
Write $\Uscr_{t,a}$ for the element of $\Dscr_{t}$ corresponding to $a \in k \cup \infty$.
Note that
\[ \div_{\can}{(t-c)}^{\circ} = [\Uscr_{t,c}] - [\Uscr_{t,\infty}] \]
for all $c \in k$.
Also, if $\Uscr_{1},\Uscr_{2} \in \Dscr_{t}$ are two distinct elements, then there exists a general element $x$ of $K|k$ such that $k(x) = k(t)$ and such that
\[ \div_{\can}x^{\circ} = [\Uscr_{1}] - [\Uscr_{2}]. \]

\begin{proposition}\label{proposition:rational-syncro}
  Let $\phi : \KL(K|k) \cong \KL(L|l)$ be an isomorphism of $\Lambda$-modules which is compatible with $\acl$ and with rational submodules, and let $x$ be a general element of $K|k$.
  Then there exists a general element $y$ of $L|l$, a unit $\epsilon \in \Lambda^{\times}$, and a bijection $\eta : k \cong l$ such that $\eta 0 = 0$, $\eta 1 = 1$, and $\phi{(x-a)}^{\circ} = \epsilon \cdot {(y-\eta a)}^{\circ}$ for all $a \in k$.
\end{proposition}
\begin{proof}
  Put $\Kscr := \Kscr_{x}$ and recall that $\Lscr := \phi \Kscr$ is a rational submodule of $\KL(L|l)$.
  By Fact~\ref{fact:anab/local-theory}, $\phi$ induces a bijection
  \[ \Uscr \mapsto \phi \Uscr \ : \ \Dscr_{\Kscr} \xrightarrow{\cong} \Dscr_{\Lscr}. \]
  Consider the canonical rational-like collection $\Phi := \Phi^{\can}_{\Kscr}$ on $\Kscr$.
  Let $\Psi$ denote the rational-like collection on $\Lscr$ induced by $\Phi$ and $\phi$.
  Explicitly, $\Psi_{\Vscr} : \Lscr/\Vscr \cong \Lambda$, $\Vscr \in \Dscr_{\Lscr}$, is the isomorphism
  \[ \Lscr/\Vscr \xrightarrow{\phi^{-1}} \Kscr/\Uscr \xrightarrow{\Phi_{\Uscr}} \Lambda \]
  where $\Uscr = \phi^{-1}\Vscr$.
  By Lemma~\ref{lemma:rational-like-canon} there exists an $\epsilon \in \Lambda^{\times}$ such that $\Psi = \epsilon^{-1} \cdot \Phi^{\can}_{\Lscr}$, and by the construction of $\Psi$ we have a commutative diagram with exact rows:
  \[
    \begin{tikzcd}
      0 \ar[r] & \Kscr \ar[r,"\div_{\can}"{pos=0.4}] \ar[d,"\phi"'] & \bigoplus_{\Uscr \in \Dscr_\Kscr} \Lambda \cdot [\Uscr] \ar[r,"\mathrm{sum}"] \ar[d,"\phi"] & \Lambda \ar[r] \ar[d,equal] & 0 \\
      0 \ar[r] & \Lscr \ar[r,"\div_{\Psi}"'{pos=0.5}] & \bigoplus_{\Vscr \in \Dscr_\Lscr} \Lambda \cdot [\Vscr] \ar[r,"\mathrm{sum}"'] & \Lambda \ar[r] & 0.
    \end{tikzcd}
  \]
  The $\phi$ in the middle is shorthand for the morphism defined by $[\Uscr] \mapsto [\phi \Uscr]$.

  Note that $\div_{\Phi}x^{\circ} = [\Uscr_{x,0}] - [\Uscr_{x,\infty}]$, hence $\div_{\Psi}(\phi x^{\circ}) = [\phi \Uscr_{x,0}] - [\phi \Uscr_{x,\infty}]$.
  By the discussion above, there exists a general element $y$ of $L|l$ such that $\Kscr_{y} = \Lscr$, and such that
  \[ \phi \Uscr_{x,0} = \Uscr_{y,0}, \ \phi\Uscr_{x,\infty} = \Uscr_{y,\infty}. \]
  Replacing $y$ with an element of the form $c \cdot y$ for some $c \in l^{\times}$ we may assume furthermore that $\phi \Uscr_{x,1} = \Uscr_{y,1}$.
  Define the bijection $\eta : k \cong l$ as the unique one satisfying $\phi \Uscr_{x,a} = \Uscr_{y,\eta a}$, for $a \in k$.
  Then for all $a \in k$, we have
  \begin{align*}
    \div_\can(\epsilon^{-1} \cdot {\phi(x-a)}^\circ) &= \div_\Psi({\phi (x-a)}^\circ) \\
      &= [\phi \Uscr_{x,a}] - [\phi \Uscr_{x,\infty}] \\
      &= [\Uscr_{y,\eta a}] - [\Uscr_{y,\infty}] \\
      &= \div_\can({(y-\eta a)}^\circ)
  \end{align*}
  The injectivity of $\div_\can$ implies that ${\phi (x-a)}^\circ = \epsilon \cdot {(y-\eta a)}^\circ$, as required.
\end{proof}

\subsection{Multiplicative synchronization}\label{subsection:multiplicative-syncro}

At this point, our proof of Theorem~\ref{theorem:main-anab} will use an adaptation of arguments due to Pop~\cite{MR2867932}*{\S6}.
Following Proposition~\ref{proposition:rational-syncro}, we will say that $\phi \in \Isom_{\rat}^{\acl}(\KL(K|k),\KL(L|l))$ is \emph{synchronized} provided that there exists some general element $x$ of $K|k$, some general element $y$ of $L|l$, and some bijection $\eta : k \cong l$ with $\eta 0 = 0$ and $\eta 1 = 1$, such that
\[ \phi {(x-a)}^{\circ} = {(y-\eta a)}^{\circ} \]
for all $a \in k$.
To specify $x$, $y$ and $\eta$, we may say that $\phi$ is \emph{synchronized by $x$ and $y$ via $\eta$}.
By Proposition~\ref{proposition:rational-syncro}, for any $\phi \in \Isom_{\rat}^{\acl}(\KL(K|k),\KL(L|l))$, there exists some $\epsilon \in \Lambda^{\times}$ such that $\epsilon \cdot \phi$ is synchronized.

The canonical map $K^{\times}/k^{\times} \to \KL(K|k)$ is injective, and we will identify $K^{\times}/k^{\times}$ with its image in $\KL(K|k)$, and similarly for $L^{\times}/l^{\times}$.
We show that a synchronized $\phi$ is compatible with these lattices.

\begin{proposition}\label{proposition:multiplicative-syncro}
  Assume that $\phi \in \Isom_{\rat}^{\acl}(\KL(K|k),\KL(L|l))$ is synchronized.
  Then one has $\phi (K^{\times}/k^{\times}) = L^{\times}/l^{\times}$.
\end{proposition}
\begin{proof}
  It suffices to prove that $\phi (K^{\times}/k^{\times}) \subset L^{\times}/l^{\times}$ since $\phi^{-1}$ is also synchronized whenever $\phi$ is.
  Put $\Mscr := \phi^{-1}(L^{\times}/l^{\times}) \cap (K^{\times}/k^{\times})$ and let $M^{\times}$ denote the preimage of $\Mscr$ in $K^{\times}$.
  Our goal is to show that $M := M^{\times} \cup \{0\} = K$.

  Suppose $\phi$ is synchronized by $x$ and $y$ via $\eta$.
  Note that ${k(x)}^{\times} \subset M^{\times}$ since ${k(x)}^{\times}$ is multiplicatively generated by elements of the form $x-a$, $a \in k$.

  More generally, assume that $u \in M^{\times}$ is general in $K|k$.
  By Proposition~\ref{proposition:rational-syncro}, there exists a bijection $\gamma : k \cong l$, a general element $w$ of $L|l$, and an $\epsilon \in \Lambda^\times$, such that $\gamma 0 = 0$, $\gamma 1 = 1$ and
  \[ \phi{(u-a)}^\circ = \epsilon \cdot {(w-\gamma a)}^\circ \]
  for all $a \in k$.
  Note in particular that $\phi u^\circ = \epsilon \cdot w^\circ$, while $\phi u^\circ \in L^\times/l^\times$.

  We claim that $\epsilon \in \Zbb$.
  Write $\epsilon = m/n$ for integers $m,n$ and $n > 0$.
  Since $l^{\times}$ is divisible, the above observations show that there exists $g \in L^{\times}$ such that $w^{m} = g^{n}$, hence $w$ and $g$ are algebraically dependent.
  As $w$ is general in $L|l$, it follows that $g \in k(w)$ and comparing $w$-adic valuations we find $\epsilon = m/n \in \Zbb$.

  With $\epsilon \in \Zbb$ as above, one has
  \[ \phi {(u-a)}^{\circ} = \epsilon \cdot {(w-\gamma a)}^{\circ} = {({(w-\eta a)}^{\epsilon})}^{\circ} \]
  for all $a \in k$, hence $u - a \in M^{\times}$.
  As ${k(u)}^{\times}$ is multiplicatively generated by $u-a$ for $a \in k$, we deduce that ${k(u)}^{\times}$ is contained in $M^{\times}$.

  Finally, since $\Lambda \subset \Qbb$, we see that for all $t \in K^{\times}$ there exists some integer $n > 0$ such that $n \cdot \phi t^{\circ} \in L^{\times}/l^{\times}$.
  In other words, $t^{n} \in M^{\times}$, so that $K^{\times}/M^{\times}$ is torsion.
  Bringing together the above observations:
  \begin{enumerate}
    \item The quotient $K^{\times}/M^{\times}$ is torsion.
    \item If $u \in M^{\times}$ is general in $K|k$, then ${k(u)}^{\times}$ is contained in $M^{\times}$.
    \item The element $x$ is contained in $M^{\times}$ and $x$ is general in $K|k$.
  \end{enumerate}

  We claim that $M$ is a additively closed in $K$.
  As $M$ is multiplicatively closed and $M^{\times} = M \smin \{0\}$ is a subgroup of $K^{\times}$, it suffices to prove that for all $u \in M$, one has $1 + u \in M$.
  As $k(x) \subset M$, we may also assume that $u \notin k(x)$, hence $u$ and $x$ are algebraically independent over $k$.

  By Fact~\ref{fact:anab/birational-bertini}, there exists $b \in k^{\times}$ and $c \in k$ such that the following elements are all general in $K|k$:
  \[ A_1 :=  \frac{u}{b \cdot x + c}, \ \ A_2 := \frac{2 \cdot u}{b \cdot x + c+1}, \ A_3 := \frac{2 \cdot u + b \cdot x + c + 1}{u + b \cdot x + c}. \]
  It is clear from the above properties that $A_1,A_2 \in M$.
  Hence
  \[ B_1 := (b \cdot x + c) \cdot (A_1 + 1) = u + b \cdot x + c \]
  and
  \[ B_2 := (b \cdot x + c+1) \cdot (A_2 + 1) = 2 \cdot u + b \cdot x + c + 1 \]
  are also elements of $M$, so that $A_3 = B_2 / B_1$ is an element of $M$ as well.
  As $A_3$ is general in $K|k$, we see that
  \[ (A_3 - 1) \cdot B_1 = 1 + u \]
  is indeed an element of $M$, as contended.

  The argument above shows that $M$ is a subfield of $K$ which contains $k$ while $K^{\times}/M^{\times}$ is torsion.
  Since $K|k$ is a function field and $k$ has characteristic $0$, it follows that $K = M$, and this concludes the proof of the proposition.
\end{proof}

\subsection{Coliniation}\label{subsection:coliniation}

Suppose now that $\phi \in \Isom^{\acl}_{\rat}(\KL(K|k),\KL(L|l))$ is synchronized, so that $\phi$ restricts to an isomorphism $K^{\times}/k^{\times} \cong L^{\times}/l^{\times}$ by Proposition~\ref{proposition:multiplicative-syncro}.
Note that $K^{\times}/k^{\times}$ is the projectivization of $K$ as a $k$-module, and similarly for $L|l$.

\begin{proposition}\label{proposition:coliniation}
  In the above context, the isomorphism
  \[ \phi : K^{\times}/k^{\times} \cong L^{\times}/l^{\times} \]
  is a coliniation, i.e.~$\phi$ sends projective lines in $K^{\times}/k^{\times}$ to projective lines in $L^{\times}/l^{\times}$.
\end{proposition}
\begin{proof}
  For $x \in K^{\times}/k^{\times}$, $x \neq 1$ and $\tilde x \in K^{\times}$ a representative of $x$, write
  \[ \Lcal_{x} := \frac{(k + \tilde x \cdot k) \cap K^{\times}}{k^{\times}} \]
  for the unique projective line in $K^{\times}/k^{\times}$ containing $1$ and $x$.
  Since $\phi$ is compatible with multiplication, it suffices to show that $\phi \Lcal_{x} = \Lcal_{\phi x}$ for all such $x$.

  Assume first that $x \in K \smin k$ and $y \in L \smin l$ are such that $\phi \Lcal_{x^{\circ}} = \Lcal_{y^{\circ}}$.
  Let $t \in K$ be algebraically independent from $x$ over $k$, and choose $u \in L \smin l$ such that $\phi t^{\circ} = u^{\circ}$.
  Choose a divisorial valuation $v$ of $K|k$ such that $v$ is trivial on $\acl_{K}(x)$ and on $\acl_{K}(t)$, and such that $x$ and $t$ have the same image in ${(Kv)}^{\times}/k^{\times}$; such a $v$ exists since $x$ and $t$ are algebraically independent over $k$.
  Put $w = v^{\phi}$, as in Fact~\ref{fact:anab/local-theory}.
  The same fact implies that $y$ and $u$ have the same image in ${(Lw)}^{\times}/l^{\times}$, while $w$ is trivial on $\acl_{L}(y)$ and on $\acl_{L}(u)$ since $\phi$ is compatible with $\acl$ (see Lemma~\ref{lemma:dim-one-geom}).

  Note that both maps
  \[ \acl_K(x)/k^\times \rightarrow {(Kv)}^\times/k^\times \leftarrow \acl_K(t)/k^\times \]
  are injective.
  Letting $\bar x = \bar t$ denote the images of $x$ and $t$ in ${(Kv)}^{\times}/k^{\times}$, the images of $\Lcal_{x^{\circ}}$ and $\Lcal_{t^{\circ}}$ both agree with $\Lcal_{\bar x}$ under these two maps.
  Since $\Uscr_{v}^{1} \cap (K^{\times}/k^{\times}) = \Urm_{v}^{1} \cdot k^{\times}/k^{\times}$ and ${\acl_{K}(t)}^{\times}/k^{\times} = \Kscr_{t} \cap (K^{\times}/k^{\times})$, we deduce that
  \[ \Lcal_{t^{\circ}} = \Kscr_{t} \cap (K^\times/k^\times) \cap (\Lcal_{x^{\circ}} \cdot (\Uscr_{v}^{1} \cap (K^\times/k^\times))). \]
  Since $\phi$ identifies $\Kscr_{t}$ with $\Kscr_{u}$, $K^{\times}/k^{\times}$ with $L^{\times}/l^{\times}$, $\Lcal_{x^{\circ}}$ with $\Lcal_{y^{\circ}}$, and $\Uscr_{v}^{1}$ with $\Uscr_{w}^{1}$, it follows that $\phi \Lcal_{t^{\circ}} = \Lcal_{u^{\circ}}$.
  Finally, since $\phi$ is synchronized, there exists some $x$ and $y$ as above such that $\phi \Lcal_{x^{\circ}} = \Lcal_{y^{\circ}}$.
  Thus, the argument above shows that for every $t \in K$ algebraically independent from $x$, we have $\phi \Lcal_{t^{\circ}} = \Lcal_{\phi t^{\circ}}$.

  If $t \in K \smin k$ is not algebraically dependent from $x$, simply choose some $s \in K \smin k$ which is algebraically independent from $x$, so the argument above shows $\phi \Lcal_{s^{\circ}} = \Lcal_{\phi s^{\circ}}$.
  Since $t$ is independent from $s$, the argument above then shows that $\phi \Lcal_{t^{\circ}} = \Lcal_{\phi t^{\circ}}$.
\end{proof}

\subsection{Concluding the proof}\label{subsection:conclusion}

We now conclude the proof of Theorem~\ref{theorem:main-anab}.
\begin{proposition}\label{proposition:ftpg}
  Assume that $\phi \in \Isom_{\rat}^{\acl}(\KL(K|k), \KL(L|l))$ is synchronized.
  Then there exists a unique isomorphism of fields $\Gamma_{\phi} : K \cong L$ such that $\phi t^{\circ} = {(\Gamma_{\phi} t)}^{\circ}$ for all $t \in K^{\times}$.
\end{proposition}
\begin{proof}
  Since $\phi$ is synchronized, it induces an isomorphism
  \[ \phi : K^{\times}/k^{\times} \cong L^{\times}/l^{\times} \]
  by Proposition~\ref{proposition:multiplicative-syncro} which is a coliniation by Proposition~\ref{proposition:coliniation}.
  By the \emph{fundamental theorem of projective geometry} (see~\cites{MR1009557,MR1783451}), there exists a unique isomorphism of fields $\gamma : k \cong l$ and a $\gamma$-semilinear isomorphism $\Gamma : K \cong L$  of additive groups such that ${\Gamma(x)}^{\circ} = \phi x^{\circ}$ for all $x \in K^{\times}$.
  Moreover, $\Gamma$ is unique with these properties up-to homothethies.
  Note that we must have $\Gamma(1) \in l^{\times}$ since $\phi(1) = 1$.
  Replace $\Gamma$ with $(1/\Gamma(1)) \cdot \Gamma$ to assume furthermore that $\Gamma(1) = 1$, and note that this $\Gamma$ is then the unique $\gamma$-semilinear isomorphism $K \cong L$ satisfying $\Gamma(1) = 1$ and ${\Gamma(x)}^{\circ} = \phi x^{\circ}$ for all $x \in K^{\times}$.

  We follow an argument which is similar to~\cite{MR2421544}*{Theorem 7.3} to show that this $\Gamma$ is a field isomorphism.
  First, since $\Gamma(1) = 1$, it follows that $\Gamma$ restricts to $\gamma : k \cong l$ on $k$.
  In particular, for $x \in K$ and $a \in k$, one has
  \[ \Gamma(a \cdot x) = \gamma(a) \cdot \Gamma(x) = \Gamma(a) \cdot \Gamma(x). \]
  Also, $\Gamma$ is already an additive homomorphism, so we only need to show its compatibility with multiplication.

  Assume therefore that $x,y \in K$ are given.
  We must show that $\Gamma(x \cdot y) = \Gamma(x) \cdot \Gamma(y)$.
  Since $\Gamma$ restricts to the field isomorphism $\gamma$ on $k$, we may assume furthermore that $x \cdot y$ and $y$ are $k$-linearly independent.
  Since $\Gamma$ induces $\phi : K^{\times}/k^{\times} \cong L^{\times}/l^{\times}$ which is multiplicative, there exists some $c \in l^{\times}$ such that
  \[ \Gamma(x \cdot y) = c \cdot \Gamma(x) \cdot \Gamma(y). \]
  Since $x \cdot y$ and $y$ are $k$-linearly independent, we see that $c^{-1} \cdot \Gamma(x \cdot y) = \Gamma(x) \cdot \Gamma(y)$ and $\Gamma(y)$ are again $l$-linearly indepdendent.

  Consider $\Gamma(x \cdot y + y)$.
  On the one hand, we have
  \[ \Gamma(x \cdot y + y) = \Gamma(x \cdot y) + \Gamma(y) = c \cdot \Gamma(x) \cdot \Gamma(y) + \Gamma(y), \]
  and on the other there exists some $d \in l^{\times}$ such that
  \begin{align*}
    \Gamma(x \cdot y + y) = \Gamma((x+1) \cdot y) &= d \cdot \Gamma(x+1) \cdot \Gamma(y) \\
    &= d \cdot (\Gamma(x)+1) \cdot \Gamma(y)\\
    &= d \cdot \Gamma(x)\cdot \Gamma(y) + d \cdot \Gamma(y)
  \end{align*}
  In particular, we see that $c = d = 1$, and hence $\Gamma(x \cdot y) = \Gamma(x) \cdot \Gamma(y)$, as required.
\end{proof}

We now conclude the proof of Theorem~\ref{theorem:main-anab}.
Let $\phi \in \Isom^{\acl}_{\rat}(\KL(K|k),\KL(L|l))$ be given.
By Proposition~\ref{proposition:rational-syncro}, there exists some $\epsilon \in \Lambda^{\times}$ such that $\psi := \epsilon \cdot \phi$ is synchronized, while Proposition~\ref{proposition:ftpg} shows there is a unique isomorphism $\Gamma_{\psi} : K \cong L$ of fields satisfying $\psi t^{\circ} = {(\Gamma_{\psi} t)}^{\circ}$.
If furthermore $\phi$ arises from a given isomorphism $\Gamma : K \cong L$, then $\phi$ is synchronized and it is easy to see that $\Gamma = \Gamma_{\phi}$.

\begin{lemma}\label{lemma:unique-syncro}
  Let $\phi : \Isom^{\acl}_{\rat}(\KL(K|k),\KL(L|l))$ be given, and suppose that $\epsilon_{1},\epsilon_{2} \in \Lambda^{\times}$ are such that both $\epsilon_{1} \cdot \phi$ and $\epsilon_{2} \cdot \phi$ are synchronized.
  Then $\epsilon_{1} = \epsilon_{2}$.
\end{lemma}
\begin{proof}
  Put $\phi_{i} := \epsilon_{i} \cdot \phi$ and $\Gamma_{i} := \Gamma_{\phi_{i}}$ as in Proposition~\ref{proposition:ftpg}.
  Consider $\Gamma = \Gamma_{1}^{-1} \circ \Gamma_{2}$ and put $\delta := \epsilon_{1}^{-1} \cdot \epsilon_{2}$.
  Write $\delta = m/n$ with $m,n$ integers such that $n > 0$.
  Note that $\Gamma$ is an automorphism of $K$ satisfying ${\Gamma(x)}^{\circ} = \delta \cdot x^{\circ}$ for all $x \in K^{\times}$.

  Let $x \in K^{\times}$ be general in $K|k$.
  Since $\Gamma k = k$, it follows that $y := \Gamma(x)$ is also general in $K|k$, and one has $y^{\circ} = \delta \cdot x^{\circ}$ hence $y^{n} = c \cdot x^{m}$ for some $c \in k^{\times}$.
  Comparing $x$-adic valuations of $x$ and $y$ with this equality, we find that $\delta \in \Zbb$ and $y = c \cdot x^{\delta}$.
  In particular, $x$ and $y$ are algebraically dependent while both $x$ and $y$ are general in $K|k$.
  This can only happen if $y \in \{c \cdot x, c \cdot x^{-1}\}$.
  The map $\epsilon \mapsto \epsilon \cdot x^{\circ}$, $\epsilon \in \Lambda^{\times}$, is injective since $x$ is general in $K|k$, hence $\delta \in \{-1,1\}$.
  We will conclude by showing $\delta = 1$.

  Assume otherwise, hence $\delta = -1$, so $y = c \cdot x^{-1}$ and there exists $d \in k^{\times}$ such that $\Gamma(1+x) = d \cdot {(1 + x)}^{-1}$.
  But $\Gamma$ is a field automorphism, hence
  \[ 1 + c \cdot x^{-1} = 1 + y = \Gamma(1+x) = d \cdot {(1+x)}^{-1}. \]
  This is clearly impossible as $x$ is transcendental over $k$.
\end{proof}

We have thus constructed a left inverse of the map
\[ \Isom(K,L) \to \Isommod^{\acl}_{\rat}(\KL(K|k),\KL(L|l)) \]
appearing in Theorem~\ref{theorem:main-anab}, which is easily seen to be functorial with respect to isomorphisms.
To conclude, we must prove that this left inverse is injective, and for this it suffices to take $K = L$ and prove that the group homomorphism we produced in this case,
\[ \Isommod^{\acl}_{\rat}(\KL(K|k),\KL(K|k)) \to \Aut(K), \]
is injective.
So suppose that $\phi$ is an automorphism of $\KL(K|k)$ which is compatible with $\acl$ and with rational submodules.
Replacing $\phi$ with $\epsilon \cdot \phi$ for some uniquely determined $\epsilon \in \Lambda^{\times}$ (see Lemma~\ref{lemma:unique-syncro}), we see may assume $\phi$ is synchronized and by assumption $\Gamma_{\phi} = \one$.
Note $\phi t^{\circ} = {(\Gamma_{\phi} t)}^{\circ} = t^{\circ}$ for all $t \in K^{\times}$, hence $\phi$ acts as the identity on $K^{\times}/k^{\times}$.
But $\KL(K|k)$ is generated as a $\Lambda$-module by this subgroup, so it follows that $\phi$ is the identity on $\KL(K|k)$ as well.
This concludes the proof of Theorem~\ref{theorem:main-anab}.

\section{Proof of the main theorem}\label{section:torelli}

We now turn to the proof of the main theorem, which we state precisely as follows.
\begin{theorem}\label{theorem:main-theorem}
  Let $\Lambda$ be a subring of $\Qbb$, and let $k$, $l$ be algebraically closed fields.
  Let $\sigma : k \hookrightarrow \Cbb$ be a complex embedding.
  Let $K$ be a function field of transcendence degree $\geq 2$ over $k$ and $L$ a function field over $l$.
  There exists an isomorphism $K \cong L$ of fields which restricts to an isomorphism $k \cong l$ if and only if there exists a complex embedding $\tau : l \hookrightarrow \Cbb$, and an isomorphism of mixed Hodge structures
  \[ \phi : \HH^1(K|k,\Lambda(1)) \cong \HH^1(L|l,\Lambda(1)) \]
  such that the map $\phi$ induces a bijection $\Rcal(K|k,\Lambda) \cong \Rcal(L|l,\Lambda)$.
  Here $\HH^{*}(K|k,\Lambda(*))$ is computed via $\sigma$ while $\HH^{*}(L|l,\Lambda(*))$ is computed via $\tau$.
\end{theorem}

The rest of the section is devoted to proving this theorem.
First, recall that any isomorphism of fields $K \cong L$ restricts to an isomorphism $k \cong l$.
Given such an isomorphism of fields, say $\Gamma : K \cong L$ with restriction $\gamma : k \cong l$, one can define $\tau : l \hookrightarrow \Cbb$ as the composition
\[ l \xrightarrow{\gamma^{-1}} k \xrightarrow{\sigma} \Cbb, \]
and the existence of $\phi$ as in the statement of the theorem is then trivial.

Let us now fix an isomorphism $\phi$ as in the statement of the theorem.
Our goal is to produce an isomorphism of fields $K \cong L$, which will automatically restrict to $k \cong l$ as observed above.
The strategy is as follows:
\begin{enumerate}
  \item First show that $\phi$ restricts to an isomorphism $\phi : \KL(K|k) \cong \KL(L|l)$ via Kummer theory.
        This will follow from the compatibility with the mixed Hodge structures.
  \item Second, show that this isomorphism is compatible with $\acl$ and with rational submodules.
  \item Conclude by applying Theorem~\ref{theorem:main-anab} to obtain an isomorphism $K \cong L$.
\end{enumerate}
From now on, we put ourselves in the context of Theorem~\ref{theorem:main-theorem}, and fix $\phi$ as in the statement.

\subsection{Compatibility with Kummer theory}

Note that the Kummer map
\[ \kappa_{K}^{\Lambda} : \KL(K|k,\Lambda) \to \HH^{1}(K|k,\Lambda(1)) \]
is injective since $\Lambda$ is flat over $\Zbb$.

\begin{lemma}\label{lemma:key-lemma}
  Let $\gamma : \Lambda \to \HH^{1}(K|k,\Lambda(1))$ be a morphism of $\Lambda$-modules.
  Then $\gamma$ arises from a morphism of mixed Hodge structures, where $\Lambda$ is the underlying module of $\Lambda(0)$, if and only if $\gamma(1)$ is contained in the image of $\kappa_{K}^{\Lambda}$.
\end{lemma}
\begin{proof}
  First suppose $t \in K^{\times}$ is given, and consider the map
  \[ \gamma_{t} : \Lambda \to \HH^{1}(K|k,\Lambda(1)) \]
  given by the composition
  \[ \Lambda = \HH^{1}(\Gbb_{m},\Lambda(1)) \xrightarrow{t^{*}} \HH^{1}(U,\Lambda(1)) \to \HH^{1}(K|k,\Lambda(1)), \]
  where $U$ is any model of $K|k$ with $t \in \Ocal^{\times}(U)$ and the $t^{*}$ is the map on cohomology induced by the associated morphism $t : U \to \Gbb_{m}$.
  Since both morphisms in this composition are compatible with the mixed Hodge structures, the same holds for $\gamma_{t}$.
  If $\gamma$ is as in the statement of the lemma, and $\gamma(1)$ is in the image of $\kappa_{K}^{\Lambda}$, then it is a linear combination of morphisms of the form $\gamma_{t}$ for $t \in K^{\times}$, so again $\gamma_{t}$ is compatible with the mixed Hodge structures.

  Conversely, suppose that $\gamma$ is compatible with the mixed Hodge structures, and let $X$ be a smooth projective model of $K|k$ and $U$ a sufficiently small nonempty open $k$-subvariety of $X$ such that $\gamma$ factors through $\HH^{1}(U,\Lambda(1)) \hookrightarrow \HH^{1}(K|k,\Lambda(1))$.
  Consider the Picard $1$-motive $\Mbf^{1,1}(U)$ associated to the inclusion $U \hookrightarrow X$, as well as the $1$-motive $\Zbb := [\Zbb \to 0]$.
  Since $\Lambda$ is flat over $\Zbb$, by Theorem~\ref{theorem:picard/picard-hodge} and Lemma~\ref{lemma:hodge-realization-iso}, the Hodge realization functor induces a canonical isomorphism
  \[ \Hom_{k}(\Zbb,\Mbf^{1,1}(U)) \otimes_{\Zbb} \Lambda \xrightarrow{\cong} \Hom_{\MHS}(\Lambda(0),\HH^{1}(U,\Lambda(1))). \]
  Unfolding the definitions, we see that
  \[ \Hom_{k}(\Zbb,\Mbf^{1,1}(U)) \otimes_{\Zbb} \Lambda = (\Ocal^{\times}(U)/k^{\times}) \otimes_{\Zbb} \Lambda, \]
  and so $\gamma$ gives rise to some element $\alpha \in (\Ocal^{\times}(U)/k^{\times}) \otimes_{\Zbb} \Lambda \subset \KL(K|k)$ via this bijection.
  Tracing through the definitions, it is easy to see that $\gamma(1) = \kappa_{K}^{\Lambda}(\alpha)$.
\end{proof}

As a consequence of this lemma, we deduce that $\phi$ restricts to an isomorphism
\[ \phi : \KL(K|k) \cong \KL(L|l) \]
which fits in the following commutative diagram:
\begin{equation}\label{equation:kummer-compat}
  \begin{tikzcd}
    \HH^{1}(K|k,\Lambda(1)) \ar[r,"\phi"] & \HH^{1}(L|k,\Lambda(1)) \\
    \KL(K|k) \ar[u,hook,"\kappa_{K}^{\Lambda}"] \ar[r,"\phi"'] & \KL(L|l) \ar[u,hook,"\kappa_{L}^{\Lambda}"']
  \end{tikzcd}
\end{equation}

\subsection{Concluding the proof}

Our final two tasks are to show that the isomorphism
\[ \phi : \KL(K|k) \cong \KL(L|l) \]
discussed in~\eqref{equation:kummer-compat} is compatible with $\acl$ and with rational submodules.
We prove these in two separate lemmas.

\begin{lemma}\label{lemma:acl-compat}
  In the above context, $\phi : \KL(K|k) \cong \KL(L|l)$ is compatible with $\acl$.
\end{lemma}
\begin{proof}
  Let $x,y \in \KL(K|k)$ be given.
  By Lemma~\ref{lemma:cup-alg-indep}, we see that $x$, $y$ are dependent if and only if $\kappa_{K}^{\Lambda}x \cup \kappa_{K}^{\Lambda} y = 0$ in $\HH^{2}(K|k,\Lambda(2))$.
  The lemma follows from the compatibility of $\phi$ with $\Rcal$ and the commutativity of~\eqref{equation:kummer-compat}.
\end{proof}

We will need the following lemma to deduce the compatibility with rational submodules.
\begin{lemma}\label{lemma:rational-helper}
  Let $t \in K \smin k$ be given.
  Then $\acl_{K}(t)|k$ is rational if and only if the Kummer map
  \[ \kappa_{\acl_{K}(t)}^{\Lambda} : \KL(\acl_{K}(t)|k) \to \HH^{1}(\acl_{K}(t)|k,\Lambda(1)) \]
  is an isomorphism.
\end{lemma}
\begin{proof}
  Put $E := \acl_{K}(t)$, and let $C$ be the smooth projective model of $E|k$.
  The Kummer map $\kappa_{E}^{\Lambda}$ in question fits in the following commutative diagram with exact rows/columns:
  \[
    \begin{tikzcd}
      {} & {} & 0 \ar[d] & 0 \ar[d] \\
      {} & {} & \KL(E|k) \ar[d,"\kappa_{E}^{\Lambda}"'] \ar[r,equal] & \KL(E|k) \ar[d,"\div \otimes \Lambda"] \\
      0 \ar[r] & \HH^{1}(C,\Lambda(1)) \ar[r] \ar[d,equal] & \HH^{1}(E|k,\Lambda(1)) \ar[r] \ar[d] & \Div^{0}(C) \otimes_{\Zbb} \Lambda \ar[r] \ar[d] & 0 \\
      0 \ar[r] & \HH^{1}(C,\Lambda(1)) \ar[r] & \coker(\kappa_{E}^{\Lambda}) \ar[r] \ar[d] & \Pic^{0}(C) \otimes_{\Zbb} \Lambda \ar[d] \ar[r] & 0 \\
      {}& {} & 0 & 0
    \end{tikzcd}
  \]
  where the map $\HH^{1}(E|k,\Lambda(1)) \to \Div^{0}(C) \otimes_{\Zbb} \Lambda$ is given by the sum of the residue maps $\partial_{x}$ associated to $x \in C(k)$.
  Now, $E|k$ is rational if and only if $C$ has genus $0$, which is equivalent to $\Pic^{0}(C) \otimes_{\Zbb} \Lambda = 0$ and $\HH^{1}(C,\Lambda(1)) = 0$.
  The assertion of the lemma follows from the exactness of the bottom row in the above diagram.
\end{proof}

\begin{lemma}\label{lemma:rational-compat}
  In the above context, $\phi : \KL(K|k) \cong \KL(L|l)$ is compatible with rational submodules.
\end{lemma}
\begin{proof}
  Let $x$ be a general element of $K|k$ so that $\Kscr_{x} := \KL(k(x)|k)$ is a rational submodule of $\KL(K|k)$.
  Put $\Lscr := \phi \Kscr$, a submodule of $\KL(L|l)$.
  Since $\phi$ is compatible with $\acl$ by Lemma~\ref{lemma:acl-compat}, $\Lscr$ must have the form $\KL(\acl_{L}(y)|l)$ for some $y \in L \smin l$ by Lemma~\ref{lemma:dim-one-geom}.
  We must show that $\acl_{L}(y)$ is rational over $l$.

  By Fact~\ref{fact:cohom-dim} and Proposition~\ref{proposition:geom-submod-cup}, we see that the image of the injective map
  \[ \HH^{1}(\acl_{L}(y)|l,\Lambda(1)) \to \HH^{1}(L|l,\Lambda(1)) \]
  can be characterized as the collection of elements $\alpha$ of $\HH^{1}(L|l,\Lambda(1))$ such that for all $\beta \in \KL(\acl_{L}(y)|l)$, one has $\kappa_{L}^{\Lambda}\beta \cup \alpha = 0$.
  The image of
  \[ \HH^{1}(k(x)|k,\Lambda(1)) \to \HH^{1}(K|k,\Lambda(1)) \]
  can be similarly characterized as the elements which pair trivially with $\Kscr_{x} = \KL(k(x)|k)$.
  The compatibility of $\phi$ with $\Rcal$ ensures that $\phi$ restricts to a commutative diagram whose horizontal morphisms are injective and whose vertical morphisms are isomorphisms which are all induced by $\phi$:
  \[
    \begin{tikzcd}
      \Kscr_{x} = \KL(k(x)|k) \ar[r,hook,"\eta"] \ar[d,"\cong"'] & \HH^{1}(k(x)|k,\Lambda(1)) \ar[r,hook] \ar[d,"\cong"] & \HH^{1}(K|k,\Lambda(1)) \ar[d,"\cong"] \\
      \Lscr = \KL(\acl_{L}(y)|k) \ar[r,hook,"\gamma"] & \HH^{1}(\acl_{L}(y)|l,\Lambda(1)) \ar[r,hook] & \HH^{1}(K|k,\Lambda(1))
    \end{tikzcd}
  \]
  The fact that $\Lscr$ is a rational submodule now follows easily from Lemma~\ref{lemma:rational-helper}.
\end{proof}

Putting everything together, $\phi$ induces an isomorphism $\KL(K|k) \cong \KL(L|l)$ via Kummer theory (see~\eqref{equation:kummer-compat}) which is compatible with $\acl$ by Lemma~\ref{lemma:acl-compat} and with rational submodules by Lemma~\ref{lemma:rational-compat}.
Thus $\Isom^{\acl}_{\rat}(\KL(K|k),\KL(L|l))$ is nonempty, hence $\Isom(K,L)$ is nonempty by Theorem~\ref{theorem:main-anab}.
This concludes the proof of Theorem~\ref{theorem:main-theorem} as any isomorphism $K \cong L$ restricts to an isomorphism $k \cong l$.

\appendix
\section{The local theory}

The \emph{local theory} in ``almost-abelian'' anabelian geometry has been extensively developed by Bogomolov~\cite{MR1260938}, Bogomolov-Tschinkel~\cite{MR1977585}, Pop~\cite{MR2735055}, and the author~\cites{TopazCrelle,MR3552293}.
Despite this, the precise statement which is needed in the above paper hasn't appeared in the literature, since previous results have mostly focused on the ``classical'' anabelian point of view of decomposition and inertia groups in Galois groups (of function fields, in this case).
In this appendix we give an essentially self-contained account of the \emph{local theory}, which is required in the main body of the present paper.
The arguments we give here are merely a distillation of the ideas developed in the references mentioned above.

We use the notation introduced in the body of the paper.
The main result in the local theory reads as follows.

\begin{theorem}\label{theorem:localtheory}
  Let $K|k$ and $L|l$ be two function fields over algebraically closed fields, and let $\Lambda$ be a subring of $\Qbb$.
  Assume that $\trdeg(K|k) \geq 2$.
  Let
  \[ \phi : \KL(K|k) \xrightarrow{\cong} \KL(L|l) \]
  be an isomorphism of $\Lambda$-modules which is compatible with $\acl$, and let $v$ be a divisorial valuation of $K|k$.
  Then there exists a unique divisorial valuation $w$ of $L|l$ such that one has $\phi (\Uscr_v) = \Uscr_{w}$ and $\phi (\Uscr_v^1) = \Uscr_{w}^1$.
\end{theorem}

We work in the context of this theorem for the rest of the appendix.
For a field extension $M|F$, We put
\[ \Gcal(M|F) := \Hom(M^{\times}/F^{\times},\Qbb) \]
considered as a $\Qbb$-module.
We consider elements of $\Gcal(M|F)$ as morphisms $M^{\times} \to \Qbb$ which are trivial on $F^{\times}$.

For a valuation $v$ of $M$, we define
\[ \Ical_{v} := \Hom(M^{\times}/(\Urm_{v} \cdot F^{\times}),\Qbb), \ \ \Dcal_{v} := \Hom(M^{\times}/(\Urm_{v}^{1} \cdot F^{\times}), \Qbb), \]
both considered as subspaces of $\Gcal(M|F)$ with $\Ical_{v} \subset \Dcal_{v}$.

Note that the isomorphism $\phi$ from Theorem~\ref{theorem:localtheory} induces an isomorphism
\[ \psi := {(\phi^{-1})}^{*} : \Gcal(K|k) \cong \Gcal(L|l), \]
and, for a divisorial valuation $v$ of $K|k$, $\Uscr_{v}$ resp.~$\Uscr_{v}^{1}$ are orthogonal to $\Ical_{v}$ resp.~$\Dcal_{v}$ with respect to the canonical $\Lambda$-bilinear pairing $\Gcal(K|k) \times \KL(K|k) \to \Qbb$.
It therefore suffices to provide a characterization of $\Ical_{v} \subset \Dcal_{v}$ for divisorial valuations $v$ of $K|k$ in terms of the following data:
\begin{enumerate}
  \item The $\Lambda$-module $\KL(K|k)$ and the $\Qbb$-module $\Gcal(K|k)$.
  \item The canonical pairing $\Gcal(K|k) \times \KL(K|k) \to \Qbb$.
  \item The binary relation on $\KL(K|k)$ given by dependence.
\end{enumerate}

\subsection{$\acl$-pairs}

Let $f,g \in \Gcal(K|k)$ be given.
We say that $(f,g)$ is an \emph{$\acl$-pair} provided that one of the following equivalent (since $\Lambda$ is a subring of $\Qbb$) conditions hold true:
\begin{enumerate}
  \item For all dependent $x,y \in \KL(K|k)$, one has $f(x) \cdot g(y) = f(y) \cdot g(x)$.
  \item For all $k$-algebraically dependent $x,y \in K^{\times}$, one has $f(x) \cdot g(y) = f(y) \cdot g(x)$.
\end{enumerate}
A subspace $\Hcal \subset \Gcal(K|k)$ is an \emph{$\acl$-subspace} if any pair of elements of $\Hcal$ are an $\acl$-pair.

\begin{lemma}\label{lemma:decomp-acl}
  Let $v$ be a valuation of $K$, and let $f \in \Ical_{v}$, $g \in \Dcal_{v}$ be given.
  Then $(f,g)$ is an $\acl$-pair.
\end{lemma}
\begin{proof}
  Suppose $x,y \in K^{\times}$ are algebraically dependent, and let $t \in K \smin k$ be such that $x,y \in \acl_{K}(t) =: M$.
  We must show that $f(x) \cdot g(y) = f(y) \cdot g(x)$.

  Let $w$ denote the restriction of $v$ to $M$.
  If $x,y \in \Urm_{v} \cdot k^{\times}$, then the claim is trivial since $f(x) = f(y) = 0$.
  Otherwise $wM/wk$ has rational rank at least one, and by Abhyankar's inequality it must be exactly one while $\trdeg(Mw|kw) = 0$ hence $\Urm_{w} \cdot k^{\times} = \Urm_{w}^{1} \cdot k^{\times}$.
  This implies that $x,y$ must have $\Qbb$-linearly dependent images in
  \[ (M^{\times}/\Urm_{w} \cdot k^{\times}) \otimes_{\Zbb} \Qbb = (M^{\times}/\Urm_{w}^{1} \cdot k^{\times}) \otimes_{\Zbb} \Qbb. \]
  Since both $f,g$ act trivially on $\Urm_{w}^{1} \cdot k^{\times}$, it follows again that $f(x) \cdot g(y) = f(y) \cdot g(x)$.
\end{proof}

\begin{theorem}\label{theorem:main-acl}
  Let $\Hcal \subset \Gcal(K|k)$ be a subspace.
  Then $\Hcal$ is an $\acl$-subspace of $\Gcal(K|k)$ if and only if there exist a valuation $v$ of $K|k$ such that $\Hcal \subset \Dcal_{v}$ and $\Ical_{v} \cap \Hcal$ has codimension at most $1$ in $\Hcal$.
\end{theorem}
\begin{proof}
  If $v$ exists as in the statement of the theorem, then the assertion about $\Hcal$ follows from Lemma~\ref{lemma:decomp-acl}.
  Conversely, suppose that $\Hcal$ is an $\acl$-subspace.
  Let $T \subset K^{\times}$ be the orthogonal of $\Hcal$ with respect to the pairing $\Gcal(K|k) \times K^{\times} \to \Qbb$.
  By~\cite{MR910395}*{Theorem 2.16}, it suffices to prove that for all $x,y \in K^{\times} \smin T$ such that $1 + x \notin T \cup x \cdot T$ and $1 + y \notin T \cup x \cdot T$, and all $f,g \in \Hcal$, one has
  \[ f(x) \cdot g(y) = f(y) \cdot g(x). \]
  Indeed, letting $H$ denote the subgroup of $K^{\times}$ generated by $T$ and all $x \in K^{\times} \smin T$ such that $1 + x \notin T \cup x \cdot T$, it follows from~\cite{MR910395}*{Theorem 2.16} that there exists a subgroup $\tilde H$ of $K^{\times}$ containing $H$ with $[\tilde H : H] \le 2$, and a valuation $v$ of $K$ such that $\Urm_{v}^{1} \subset T$ and $\Urm_{v} \subset \tilde H$.
  The condition above would then imply that $\Ical := \Hom(K^{\times}/\tilde H,\Qbb) = \Hom(K^{\times}/H,\Qbb)$ has codimension at most one in $\Hom(K^{\times}/T,\Qbb) = \Hcal$, while $\Hcal \subset \Dcal_{v}$ and $\Ical \subset \Ical_{v}$.

  Assume toward a contradiction that this does not hold, and let $x,y \in K^{\times}$, $f,g \in \Hcal$ witness this.
  Put $\Phi := (f,g) : K^{\times} \to \Qbb^{2}$.
  By our assumption on $\Hcal$, we see that whenever $u,v \in K^{\times}$ with $u \pm v \in K^{\times}$, the triple $(\Phi(u \pm v),\Phi(u),\Phi(v))$ is colinear (in the affine sense).
  Under our assumption on $x,y$ and $f,g$, the pair $\Phi(x)$, $\Phi(y)$ is linearly independent and $\Phi(1 + x) \notin \{\Phi(1),\Phi(x)\}$, $\Phi(1+y) \notin \{\Phi(1),\Phi(y)\}$.

  Embed $\Qbb^{2} = \Abb^{2}(\Qbb)$ into $\Pbb^{2}(\Qbb)$ via $(a,b) \mapsto (1:a:b)$, and compose with this inclusion and the unique projective-linear automorphism $\Sigma$ of $\Pbb^{2}(\Qbb)$ satisfying
  \[ \Sigma(1:0:0) = (1:0:0), \ \ \Sigma(\Phi(x)) = (1:1:0), \ \ \Sigma(\Phi(y)) = (1:0:1), \]
  \[ \Sigma(\Phi(1+x)) = (0:1:0), \ \ \Sigma(\Phi(1+y)) = (0:0:1). \]
  Denote the resulting map by $\Psi : K^{\times} \to \Pbb^{2}(\Qbb)$.
  The following conditions hold:
  \begin{enumerate}
    \item For all $u,v \in K^{\times}$ with $u \pm v \in K^{\times}$, the triple $(\Psi(u),\Psi(v),\Psi(u \pm v))$ is colinear.
    \item One has $\Psi(1) = (1:0:0)$, $\Psi(x) = (1:1:0)$ and $\Psi(y) = (1:0:1)$.
    \item One has $\Psi(1+x) = (0:1:0)$ and $\Psi(1+y) = (0:0:1)$.
  \end{enumerate}
  We will show that the image of $\Psi$ contains a complete projective line, which is impossible by the construction of $\Psi$, as the image of $\Psi$ misses the image under $\Sigma$ of the line at infinity.
  For $u,v \in K^{\times}$ with $\Psi(u) \ne \Psi(v)$, write $\Lfrak(u,v)$ for the unique projective line containing $\Psi(u)$ and $\Psi(v)$.

  The proof that follows is elementary but technical, and comes down to computing intersections of pairs of lines of the form $\Lfrak(u,v)$ based on a sum/difference decomposition of the corresponding elements in $K^{\times}$.
  For example, $\Psi(1+x+y) = (1:1:1)$ since $1 + x + y = (1+x)+y = (1+y)+x$, hence $\Psi(1+x+y)$ lies in the intersection $\Lfrak(1+x,y) \cap \Lfrak(1+y,x)$, which contains the unique point $(1:1:1)$.
  In the steps below, we give the sum/difference decomposition, leaving the straightforward computation of the intersection of the corresponding lines to the reader.

  \begin{step}\label{step1}
    One has $\Psi(1+x+y) = (1:1:1)$.
  \end{step}
  \begin{proof}
    This follows from the equation $1 + x + y = (1 + x) + y = (1 + y) + x$.
  \end{proof}

  \begin{step}\label{step2}
    One has $\Psi(2+x+y) = (0:1:1)$.
  \end{step}
  \begin{proof}
    This follows from the equation
    \[ 2+x+y = 1 + (1 + x + y) = (1+x) + (1+y) \]
    and Step~\ref{step1}.
  \end{proof}

  \begin{step}\label{step3}
    For all integers $n \geq 1$, $\Psi((2-n)+x+y) = (n:1:1)$ and $\Psi((1-n)+x) = (n:1:0)$.
  \end{step}
  \begin{proof}
    We proceed by induction on $n$ with the base-case $n=1$ having been done above.
    For the inductive case, first calculate $\Psi((2-(n+1))+x+y)$ using
    \begin{align*}
      (2-(n+1))+x+y &= ((1-n)+x) + y \\
                    &= ((2-n)+x+y) - 1
    \end{align*}
    combined with the inductive hypothesis, showing that
    \[ \Psi((2-(n+1))+x+y) = (n+1:1:1). \]
    Conclude by calculating $\Psi((1-(n+1))+x)$ using
    \begin{align*}
      (1-(n+1))+x &= ((2-(n+1))+x+y) - (1+y) \\
                  &= ((1-n)+x) - 1,
    \end{align*}
    along with the calculation above and the inductive hypothesis to obtain
    \[ \Psi((1-(n+1))+x) = (n+1:1:0). \]
    This concludes the proof of this step.
  \end{proof}

  \begin{step}\label{step4}
    For all integers $n,m \geq 1$, one has
    \[ \Psi((1+m-n)+m \cdot x+y) = (n:m:1), \ \Psi((m-n)+m \cdot x) = (n:m:0). \]
  \end{step}
  \begin{proof}
    Proceed by induction on $m$ with the case $m = 1$ taken care of by Step~\ref{step3}.
    For the inductive case, first use
    \begin{align*}
      (1+(m+1)-n)+(m+1) \cdot x+y &= ((1+m-n)+m \cdot x+y) + (1+x) \\
                           &= ((m-n)+m \cdot x) + (2+x+y)
    \end{align*}
    along with the inductive hypothesis and Step~\ref{step1} to deduce
    \[ \Psi((1+(m+1)-n)+(m+1) \cdot x+y) = (n:m+1:1). \]
    Then use the equation
    \begin{align*}
      ((m+1)-n)+(m+1) \cdot x &= ((m-n)+m \cdot x) + (1+x) \\
                       &= ((1+(m+1)-n)+(m+1) \cdot x+y)-(1+y).
    \end{align*}
    with the inductive hypothesis and the above calculation to deduce
    \[ \Psi(((m+1)-n)+(m+1) \cdot x) = (n:m+1:0). \]
    This concludes the proof of this step.
  \end{proof}

  \begin{step}\label{step5}
    One has $\Psi(2+x) = (1:-1:0)$.
  \end{step}
  \begin{proof}
    Use the equation $2 + x = 1 + (1 + x) = (2 + x + y) - y$ along with Step~\ref{step2}.
  \end{proof}

  We can now conclude, as follows.
  First, by Step~\ref{step4}, the image of $\Psi$ contains the set
  \[ \{(1:a:0) \ | \ a \in \Qbb_{\geq 0} \} \cup \{(0:1:0)\}. \]
  Repeat the argument above while replacing $x$ with $x' := -2-x$ and $1 + x' = -1-x$, while noting that $\Psi(x') = \Psi(2+x) = (1:-1:0)$ by Step~\ref{step5} and $\Psi(1+x') = \Psi(1+x) = (0:1:0)$, to see that $\{(1:a:0) \ | \ -a \in \Qbb_{\geq 0}\}$ is also contained in the image of $\Psi$.
  Hence the whole projective line $\Lfrak(1,x)$ is contained in the image of $\Psi$, which is impossible as discussed above.
  This provides the required contradiction, hence proving the theorem.
\end{proof}

We will need the following refinement of the above theorem.
\begin{proposition}\label{proposition:defectless-acl}
  Put $d := \trdeg(K|k)$, and let $\Hcal \subset \Gcal(K|k)$ be an $\acl$-subspace of dimension $d$.
  Then there exists a valuation $v$ of $K$ with no transcendence defect in $K|k$ such that $\Ical_{v} \subset \Hcal \subset \Dcal_{v}$ and $\Ical_{v}$ has codimension at most one in $\Hcal$.
\end{proposition}
\begin{proof}
  By Theorem~\ref{theorem:main-acl}, there exists a valuation $v$ such that $\Hcal \subset \Dcal_{v}$ and such that $\Ical_{v} \cap \Hcal$ has codimension at most one in $\Hcal$.
  By Abhyankar's inequality, $\Ical_{v}$ has dimension $\le d$.
  Furthermore, if $\dim_{\Qbb}\Ical_{v} = d$ then $v$ is defectless and $\trdeg(Kv|kv) = 0$ so that $\Dcal_{v} = \Ical_{v}$ hence $\Ical_{v} = \Hcal = \Dcal_{v}$.
  Otherwise $\dim_{\Qbb} \Ical_{v} \le d-1$, and we still have $\Ical_{v} \subset \Hcal$ since $\Ical_{v} \cap \Hcal$ has dimension at least $d-1$.
  In any case, we have $\Ical_{v} \subset \Hcal \subset \Dcal_{v}$.

  To conclude we must show that $v$ is defectless.
  If $\Ical_{v} = \Hcal$ then we are done, as observed above, so assume that $\Ical_{v} \neq \Hcal$.
  The argument above shows that $\Ical_{v}$ has dimension $d-1$, hence $vK/vk$ has rational rank $d-1$.
  Thus, the only way that $v$ can have any transcendence defect is if $\trdeg(Kv|kv) = 0$ in which case $\Dcal_{v} = \Ical_{v}$.
  The fact that $\Ical_{v} \subset \Hcal \subset \Dcal_{v}$ while $\Ical_{v} \neq \Hcal$ shows that this cannot happen.
\end{proof}

\subsection{Quasi-divisorial valuations}

A valuation $v$ of $K$ is called a \emph{quasi-divisorial valuation of $K|k$} provided that it is minimal with respect to the following conditions:
\begin{enumerate}
  \item One has $vK/vk \cong \Zbb$.
  \item One has $\trdeg(Kv|kv) = \trdeg(K|k)-1$.
\end{enumerate}
Note that a quasi-divisorial valuation is divisorial if and only if it is trivial on $k$.

\begin{proposition}\label{proposition:qpd-Ical}
  Let $\Ical \subset \Gcal(K|k)$ be a one-dimensional subspace, and put $d := \trdeg(K|k)$.
  Assume that $d \ge 2$.
  Then there exists a quasi-divisorial valuation $v$ of $K|k$ such that $\Ical = \Ical_{v}$ if and only if there exists two $\acl$-subspaces $\Hcal_{1},\Hcal_{2}$ of dimension $d$ such that $\Ical = \Hcal_{1} \cap \Hcal_{2}$.
\end{proposition}
\begin{proof}
  If $\Ical = \Ical_{v}$, then the existence of $\Hcal_{1}$ and $\Hcal_{2}$ is easy by choosing two independent $kv$-valuations $w_{1}$, $w_{2}$ of $Kv$ whose value group is $\Zbb^{d-1}$ with the lexicographic ordering, letting $v_{i} = w_{i} \circ v$, and taking $\Hcal_{i} = \Ical_{v_{i}}$.

  For the converse, we know from Proposition~\ref{proposition:defectless-acl} that there exist defectless valuations $v_{1}$ and $v_{2}$ such that $\Ical_{v_{i}} \subset \Hcal_{i} \subset \Dcal_{v_{i}}$, with $\Ical_{v_{i}}$ having codimension at most one in $\Hcal_{i}$.
  By our assumption on $d$, $v_{1}$ and $v_{2}$ are both nontrivial.
  Furthermore, the two valuations $v_{1},v_{2}$ must be \emph{dependent} since $\Ical \subset \Dcal_{v_{1}} \cap \Dcal_{v_{2}}$ with $\Ical$ being nontrivial, hence $\Urm_{v_{1}}^{1} \cdot \Urm_{v_{2}}^{1} \neq K^{\times}$.
  Let $v_{0}$ denote the maximal common coarsening of $v_{1},v_{2}$, so $\Urm_{v_{0}} = \Urm_{v_{1}}^{1} \cdot \Urm_{v_{2}}^{1}$, and thus $\Ical \subset \Ical_{v_{0}}$.
  Let $v$ be the coarsening of $v_{0}$ associated to the maximal convex subgroup of $v_{0}(\Ical^{\perp})$, where $\Ical^{\perp}$ is the orthogonal to $\Ical$ with respect to the pairing $\Gcal(K|k) \times K^{\times} \to \Qbb$.
  Note that $\Ical \subset \Ical_{v}$ as well, while $\Dcal_{v_{i}} \subset \Dcal_{v}$ for $i = 1,2$.
  We claim that $v$ does the job.

  As $v$ is a coarsening of $v_{i}$, $i = 1,2$, it is also defectless, hence $vK/vk$ is finitely-generated as an abelian group and $Kv|kv$ is finitely-generated as a field extension.
  Note that $\Ical \subset \Ical_{v} \subset \Ical_{v_{1}} \cap \Ical_{v_{2}} \subset \Hcal_{1} \cap \Hcal_{2} = \Ical$, hence $\Ical = \Ical_{v}$.
  As $vK/vk$ is a finitely-generated free abelian group, it follows that $vK/vk \cong \Zbb$ since $\dim_{\Qbb} \Ical = 1$.
  Since $v$ is defectless, this then implies that $\trdeg(Kv|kv) = \trdeg(K|k)-1$.
  The minimality of $v$ is ensured from the construction since $v(\Ical_{v}^{\perp})$ has no nontrivial convex subgroup.
\end{proof}

\begin{lemma}\label{lemma:dcal-characterization}
  Let $v$ be a quasi-divisorial valuation of $K|k$.
  Then $\Dcal_{v}$ is the subspace of $\Gcal(K|k)$ consisting of elements $f$ which form an $\acl$-pair with every element of $\Ical_{v}$.
\end{lemma}
\begin{proof}
  By Lemma~\ref{lemma:decomp-acl}, any element of $\Dcal_{v}$ forms an $\acl$-pair with any element of $\Ical_{v}$.
  Conversely, suppose that $f$ forms an $\acl$-pair with every element of $\Ical_{v}$.
  Let $x \in \mfrak_{v}$ be given.
  Note $\Ical_{v}$ is one-dimensional, and let $g$ be a generator of $\Ical_{v}$.
  If $g(x) \ne 0$, then $f(1+x) = 0$ since
  \[ f(1+x) \cdot g(x) = f(x) \cdot g(1+x) = 0 \]
  as $1+x \in \Urm_{v}$.
  Otherwise, the minimality of $v$ ensures that there exists some $y \in K^{\times}$ with $0 < v(y) < v(x)$ and $g(y) \ne 0$.
  Arguing as above, we have $f(1+y) = 0$, and since $v(y + x \cdot (1 + y)) = v(y)$, so that $g(y + x \cdot (1 + y)) = g(y) \neq 0$ as well, we have
  \[ f(1+x) + f(1+y) = f(1 + (y + x \cdot (1+y))) = 0 \]
  hence $f(1+x) = 0$.
  In other words, $f$ acts trivially on $\Urm_{v}^{1}$, so $f \in \Dcal_{v}$, as required.
\end{proof}

\subsection{Characterizing divisorial valuations}

Let us summarize what we have done.
\begin{enumerate}
  \item We gave a characterization of $\Ical_{v}$ for quasi-divisorial valuations $v$ of $K|k$ in terms of $\acl$-pairs in Proposition~\ref{proposition:qpd-Ical}.
  \item For every such $v$, we gave a characterization of $\Dcal_{v}$ in terms of $\acl$-pairs and $\Ical_{v}$ in Lemma~\ref{lemma:dcal-characterization}.
\end{enumerate}
In other words, we provided a characterization of $\Ical_{v} \subset \Dcal_{v}$ for all quasi-divisorial valuations $v$ of $K|k$ in terms of $\acl$-pairs in $\Gcal(K|k)$.
To conclude the proof of Theorem~\ref{theorem:localtheory}, we will provide a characterization of the divisorial valuations among the quasi-divisorial valuations using $\acl$-pairs in $\Gcal(K|k)$ and the dependence relation in $\KL(K|k)$.

\begin{lemma}\label{lemma:div-characterization}
  Let $v$ be a quasi-divisorial valuation of $K|k$ and assume that $\trdeg(K|k) \geq 2$.
  Then $v$ is divisorial if and only if there exists some $t \in K \smin k$ such that the composition
  \[ \Dcal_{v} \hookrightarrow \Gcal(K|k) \to \Gcal(\acl_{K}(t)|k) \]
  is surjective, where the morphism $\Gcal(K|k) \to \Gcal(\acl_{K}(t)|k)$ is the canonical one arising from the inclusion $\acl_{K}(t) \hookrightarrow K$.
\end{lemma}
\begin{proof}
  First, if $v$ is divisorial, we can choose some $t \in \Urm_{v} \smin k^{\times} \cdot \Urm_{v}^{1}$ so that $v$ is trivial on $\acl_{K}(t)$, and, letting $s$ denote the image of $t$ in $Kv$, we obtain an embedding $\iota : \acl_{K}(t) \to \acl_{Kv}(s)$.
  The maps in question fits in a commutative diagram of the form:
  \[
    \begin{tikzcd}
      \Dcal_{v} \ar[d,hook] \ar[r,two heads] & \Gcal(Kv|kv) \ar[r,two heads] & \Gcal(\acl_{Kv}(s)|k) \ar[d, two heads, "\iota^{*}"{pos=0.4}] \\
      \Gcal(K|k) \ar[rr,two heads] & {} & \Gcal(\acl_{K}(t)|k)
    \end{tikzcd}
  \]
  where the arrows decorated with two heads are surjective.
  Hence the map in question is indeed surjective.

  Conversely, assume that $v$ is nontrivial on $k$, and let $t \in K \smin k$ be given.
  Replacing $t$ with $a \cdot t^{\pm 1}$ for some $a \in k^{\times}$ if needed, we may assume that $v(t) > 0$.
  With this in mind, we have $1 + a \cdot t \in \Urm_{v}^{1}$ for all $a \in k^{\times}$ such that $v(a) > 0$.
  There are infinitely many such $a$ where $1 + a \cdot t$ have $\Qbb$-linearly independent images in $\Kscr_{\Qbb}(\acl_{K}(t)|k)$.
  Dually, the map $\Dcal_{v} \to \Gcal(\acl_{K}(t)|k)$ mentioned in the statement must have a cokernel of infinite rank.
\end{proof}

\subsection{Concluding the proof}

The proof of Theorem~\ref{theorem:localtheory} is a simple matter of putting everything together.
Let $\phi$ be as in the statement of the theorem.
Since $\phi$ is compatible with $\acl$, it follows that $\trdeg(L|l) = \trdeg(K|k) \geq 2$.
This $\phi$ induces an isomorphism $\psi := {(\phi^{-1})}^{*} : \Gcal(K|k) \cong \Gcal(L|l)$, and by Theorem~\ref{proposition:qpd-Ical} and Lemma~\ref{lemma:dcal-characterization} it follows that for every divisorial valuation $v$ of $K|k$ there exists some \emph{quasi-divisorial} valuation $w$ of $L|l$ such that $\psi \Dcal_{v} = \Dcal_{w}$ and $\psi \Ical_{v} = \Ical_{w}$.
Dualizing with respect to the pairing
\[ \Gcal(-) \times \KL(-) \to \Lambda, \]
we see that $\phi \Uscr_{v} = \Uscr_{w}$ and $\phi\Uscr_{v}^{1} = \Uscr_{w}^{1}$.

Arguing as in Lemma~\ref{lemma:dim-one-geom} from the main body of the paper, for every $t \in K \smin k$, there exists some $s \in L \smin l$ such that $\phi \Kscr_{t} = \Kscr_{s}$.
Note that the inclusion $\Kscr_{t} \hookrightarrow \KL(K|k)$ dualizes to $\Gcal(K|k) \to \Gcal(\acl_{K}(t)|k)$, and similarly $\Kscr_{s} \hookrightarrow \KL(L|l)$ dualizes to $\Gcal(L|l) \to \Gcal(\acl_{L}(s)|l)$.
Hence, by Lemma~\ref{lemma:div-characterization}, $w$ is divisorial.
To conclude, simply note that the valuation $w$ is uniquely determined by $\Uscr_{w}$, since the value group of $w$ is $\Zbb$.

\bibliography{refs}

\end{document}